\newtheorem{theorem}{Theorem}[section]
\newtheorem*{theorem*}{Theorem}
\newtheorem{corollary}[theorem]{Corollary}
\newtheorem{lemma}[theorem]{Lemma}
\newtheorem{prop}[theorem]{Proposition}
\newtheorem*{prop*}{Proposition}
\newtheorem*{embedding-corollary}{Corollary~\ref{Cor:Embedding}}
\newtheorem*{main-result}{Theorem~\ref{Theorem:Main-Result}}
\theoremstyle{definition}
\newtheorem{definition}[theorem]{Definition}
\theoremstyle{remark}
\newtheorem{remark}[theorem]{Remark}
\newcommand{\A}{\mathcal A}
\newcommand{\N}{\mathbb N}
\DeclareMathOperator{\opint}{int}
\renewcommand{\int}{\opint}
\DeclareMathOperator{\dist}{dist}
\definecolor{darkgreen}{rgb}{0,.66,0}
\definecolor{lightgreen}{rgb}{.5,.83,.5}
\title{
    Homomorphisms from aperiodic subshifts to subshifts with the finite extension property
}
\author{Robert Bland}
\address{Robert Bland\\
Department of Mathematics and Statistics\\
University of North Carolina at Charlotte \\
9201 University City Blvd.\\
Charlotte, NC 28223}
\email{rbland5@charlotte.edu}
\author{Kevin McGoff}
\address{Kevin McGoff\\
Department of Mathematics and Statistics\\
University of North Carolina at Charlotte \\
9201 University City Blvd.\\
Charlotte, NC 28223}
\email{kmcgoff1@charlotte.edu}
\urladdr{https://pages.charlotte.edu/kevin-mcgoff/}
\begin{document}

\begin{abstract}
Given a countable group $G$ and two subshifts $X$ and $Y$ over $G$, a continuous, shift-commuting map $\phi : X \to Y$ is called a homomorphism. Our main result states that if every finitely generated subgroup of $G$ has polynomial growth, $X$ is aperiodic, and $Y$ has the finite extension property (FEP), then there exists a homomorphism $\phi : X \to Y$. 
By combining this theorem with the main result of \cite{bland}, we obtain that if the same conditions hold, and if additionally the topological entropy of $X$ is less than the topological entropy of $Y$ and $Y$ has no global period, then $X$ embeds into $Y$. We also establish some facts about subshifts with the FEP that may be of independent interest.
\end{abstract}

\maketitle

\section{Introduction}

Consider a countable group $G$ and two subshifts $X$ and $Y$ over $G$. We are interested in conditions that guarantee the existence of a continuous, shift-commuting map $\phi : X \to Y$, known as a homomorphism. The study of subshifts via the homomorphisms between them has played a central role in symbolic dynamics (see \cite{lind_marcus} for an introduction). 
Indeed, when $G$ is the group of integers, the coding results known as Krieger's Embedding Theorem \cite[Theorem~3]{krieger} and Boyle's Lower Entropy Factor Theorem \cite[Theorem~2.5]{boyle}  serve as cornerstone results in the study of subshifts of finite type (SFTs).

Several more recent results have developed the coding theory of SFTs over increasingly general classes of groups (e.g., see \cite{bland,briceno_mcgoff_pavlov,huczek_kopacz,lightwood_1,lightwood_2,meyerovitch}), and the present work can be viewed as continuing in this direction. More specifically, we are largely motivated by recent efforts to generalize Krieger's Embedding Theorem to more general groups. The first such result, due to Lightwood \cite{lightwood_1}, holds for $G = \mathbb{Z}^d$. For notation, we let $h(X)$ denote the topological entropy of a subshift $X$.
\begin{theorem}[Lightwood \cite{lightwood_1}] \label{Thm:Lightwood1}
    If $X$ is an aperiodic $\mathbb{Z}^d$ subshift, $Y$ is a $\mathbb{Z}^d$ square-mixing SFT which contains a finite orbit, and there exists a homomorphism $\phi : X \to Y$, then there exists an embedding of $X$ into $Y$ if and only if $h(X) < h(Y)$.
\end{theorem}
Observe that in addition to the structural assumptions on $X$ (aperiodicity) and $Y$ (square-mixing SFT with a finite orbit), this result contains the hypothesis that there exists a homomorphism $\phi : X \to Y$. In an effort to find sufficient conditions to guarantee that this hypothesis holds, Lightwood went on to establish the following complementary theorem for $G = \mathbb{Z}^2$.
\begin{theorem}[Lightwood \cite{lightwood_2}] \label{Thm:LightwoodHom}
If $X$ is an aperiodic $\mathbb{Z}^2$ subshift and $Y$ is a $\mathbb{Z}^2$ square-filling mixing SFT, then there exists a homomorphism $\phi : X \to Y$.
\end{theorem}
Combining these two theorems, Lightwood obtained a $\mathbb{Z}^2$ generalization of Krieger's result in the setting where $X$ is aperiodic and $Y$ is square-filling mixing. 
We are not aware of any other results that address the case $G = \mathbb{Z}^d$ for $d \geq 3$.

{In more recent work}, the first author established the following theorem for all countable amenable groups with the comparison property \cite[Definition~6.1]{downarowicz_zhang-symb-ext}, an algebraic property based on the idea of dynamical comparison \cite[Definition~3.2]{kerr}. Many groups have been shown to have the comparison property, including all countable ``subexponential" groups (i.e., those not containing a finitely generated subgroup of exponential growth) \cite[Theorem~6.33]{downarowicz_zhang-symb-ext}. 

\begin{theorem}[Bland \cite{bland}] \label{Thm:bland}
    Let $G$ be a countable amenable group with the comparison property. Let $X$ be a nonempty aperiodic $G$ subshift, and let $Y$ be a strongly irreducible SFT with no global period. If $h(X) < h(Y)$ and there exists a homomorhpism $\phi : X \to Y$, then there exists an embedding of $X$ into $Y$.
\end{theorem}
Observe that this theorem also contains the hypothesis that there exists a homomorphism $\phi : X \to Y$. In an effort to establish sufficient conditions for this hypothesis to hold, we establish the following theorem, which is our main result. 
For definitions, see Section \ref{Sect:Background}.
\begin{theorem} \label{Theorem:Main-Result}
Let $G$ be a countable group such that every finitely generated subgroup of $G$ has polynomial growth. 
Let $X$ be an aperiodic $G$-subshift, and let $Y$ be a nonempty $G$-subshift with the finite extension property. Then there exists a homomorphism $\phi : X \to Y$. 
\end{theorem}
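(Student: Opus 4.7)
The plan is to construct $\phi$ as a sliding block code via a multi-scale clopen tiling argument on $X$, generalizing Lightwood's scheme \cite{lightwood_2} from $\mathbb{Z}^2$ to countable groups whose finitely generated subgroups have polynomial growth. Since the FEP of $Y$ is witnessed by some finite set $F_Y \subset G$ and the intended block code will have a finite window, both can be placed inside a single finitely generated subgroup $H \leq G$, which by hypothesis has polynomial growth. The bulk of the construction proceeds inside $H$; once a continuous $G$-equivariant map is produced, the Curtis--Hedlund--Lyndon theorem automatically presents it as a sliding block code.

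The first step is a clopen marker lemma for the free $H$-action on $X$: for every finite $K \subset H$, aperiodicity yields a clopen set $M \subset X$ such that, for each $x \in X$, the return set $\{g \in H : g^{-1}x \in M\}$ is $K$-separated in $H$ and has bounded gaps. This is standard for free actions of countable amenable groups. Iterating across a geometrically increasing sequence of scales produces a hierarchical clopen tiling of $X$: each tile at scale $n$ is nested inside a unique tile at scale $n+1$, and between any two scale-$n$ tiles, or between a scale-$n$ tile and the boundary of its enclosing coarser tile, the buffer width exceeds $F_Y$. Polynomial growth of $H$ is crucial here to keep the buffers a small fraction of each coarser tile, so that the iteration is self-sustaining.

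With the hierarchy in place, I would define $\phi$ inductively across scales. On each smallest tile, prescribe a fixed $Y$-admissible pattern. Given $Y$-admissible patterns on the scale-$n$ subtiles of a scale-$(n+1)$ tile, their pairwise separation exceeds the FEP threshold, so the FEP produces a $Y$-admissible pattern on the full scale-$(n+1)$ tile extending the given data. Iterating across all scales yields a valid global pattern, i.e., an element of $Y$. Continuity and shift-equivariance follow from the clopen equivariant character of the tile hierarchy.

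The principal obstacle is engineering the multi-scale clopen tiling with the required separation and density guarantees in the polynomial-growth setting. In $\mathbb{Z}^2$, Lightwood relies on the Euclidean geometry of rectangular Rokhlin towers, whereas for a general polynomial-growth group one must substitute a tiling construction valid in this broader setting and use polynomial boundary growth to show the buffer regions can be absorbed at every scale. A secondary point is passing from the $H$-based construction to a genuine $G$-equivariant map, which is essentially automatic for sliding block codes with window contained in $H$.
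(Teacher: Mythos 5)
Your overall plan — build a shift-equivariant tile structure on $X$ via markers, then fill in $Y$-admissible content tile by tile using the FEP, finally invoking Curtis--Hedlund--Lyndon — is the right shape of argument, and the reduction to a single finitely generated subgroup $H$ of polynomial growth is indeed the intended first step. However, two of your intermediate steps rest on constructions that are not available in this generality and conceal the real difficulty.

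First, you posit a \emph{nested} multi-scale clopen tiling: each scale-$n$ tile sits inside a unique scale-$(n+1)$ tile, separated by fixed buffers. What the aperiodicity/marker machinery (Proposition~\ref{Prop:Quasi} in the paper, following Downarowicz--Huczek) actually delivers is much weaker: a single-scale \emph{quasi-tiling}, i.e.\ a sliding block code $\phi: X \to \Sigma(\{0,1\})$ whose image is $F$-disjoint and $FF^{-1}$-covering. Tiles are pairwise disjoint, gaps are bounded, but there is no nesting, no exact covering, and no hierarchy. Building an exact nested clopen hierarchy of tilings for all free actions of polynomial-growth groups is a substantially harder theorem that you are implicitly asserting; it is not a corollary of iterating the marker lemma, because markers at a coarser scale will not in general align with tile boundaries at the finer scale. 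The paper instead works at essentially a single scale and replaces the hierarchy by a counting estimate: the bounded-degree lemma (Lemma~\ref{Lemma:BoundedDegree}), which uses polynomial growth (via the doubling property) to bound uniformly the number of tile centers near any point. That bound, not buffer absorption across scales, is where polynomial growth enters.

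Second, even granting the nested hierarchy, your closing claim that continuity and equivariance ``follow from the clopen equivariant character of the tile hierarchy'' glosses over the central obstruction. To determine $\phi(x)(g)$ in your scheme one must know the scale-$n$ tile containing $g$ for every $n$ at which $g$ falls in a buffer region, and nothing prevents a point from lying in buffer regions at infinitely many scales. In that case $\phi(x)(g)$ depends on arbitrarily large windows $x(gR_n)$, and the map is not a sliding block code (and continuity is precisely what would fail). One must either prove a stabilization statement (every $g$ is ``resolved'' at a uniformly bounded scale — which is false for a generic hierarchy with fixed-width buffers), or else abandon the hierarchy in favor of a single-scale construction in which the value at $g$ is computed from the bounded number of nearby tiles and a deterministic FEP-based filling rule on the bounded gaps. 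This locality step, together with checking that the FEP extension can be made into a \emph{rule} rather than a mere existence statement, is the technical heart of the proof and is the part your outline leaves unaddressed.
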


The proof of this result appears in Section \ref{Sect:Proof}. We remark that this theorem provides conditions for the existence of homomorphisms when $G = \mathbb{Z}^d$, for all $d \in \N$, and in this sense it can be viewed as a generalization of Lightwood's result for $\mathbb{Z}^2$ (Theorem \ref{Thm:LightwoodHom} above). We remark that although the finite extension property (FEP) is similar in spirit to Lightwood's square-filling mixing property, the FEP is a stronger condition, and therefore Theorem \ref{Theorem:Main-Result} is not a proper generalization of Lightwood's result.

By combining Theorem \ref{Theorem:Main-Result} with Theorem \ref{Thm:bland}, we obtain the following embedding theorem.
\begin{corollary} \label{Cor:Embedding} Let $G$ be a countable group such that every finitely generated subgroup of $G$ has polynomial growth. 
Suppose $X$ is a nonempty aperiodic $G$-subshift and $Y$ is a $G$-subshift with the finite extension property and no global period. If $h(X) < h(Y)$, then $X$ embeds into $Y$.
\end{corollary}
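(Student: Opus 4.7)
The plan is to deduce this corollary as a direct assembly of Theorem \ref{Theorem:Main-Result} and the embedding theorem from \cite{bland}. The main result of \cite{bland} takes as input a morphism $\phi : X \to Y$ between $G$-subshifts, together with hypotheses including aperiodicity of $X$, FEP and no global period for $Y$, and the strict entropy inequality $h(X) < h(Y)$, and upgrades that morphism into an embedding. So the only additional ingredient needed to obtain an embedding is the existence of \emph{some} morphism $X \to Y$ in the first place.

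That existence is precisely what Theorem \ref{Theorem:Main-Result} provides: since $G$ is countable with every finitely generated subgroup of polynomial growth, $X$ is aperiodic, and $Y$ has the FEP, Theorem \ref{Theorem:Main-Result} produces a continuous shift-commuting map $\phi : X \to Y$. Feeding this $\phi$, together with the assumed entropy inequality and the no-global-period condition on $Y$, into the main result of \cite{bland} yields an embedding $X \hookrightarrow Y$. Because the corollary is a clean composition of two prior theorems, there is no substantive mathematical obstacle; the only thing to check is that the hypotheses in the two sources are phrased compatibly, i.e.\ the same notion of morphism, the same definitions of FEP and of global period, and that the polynomial-growth-subgroup class of groups fits within the scope of \cite{bland}. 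The authors' remark that the corollary is ``immediate'' signals that this bookkeeping check is indeed routine, so a three- or four-line proof that invokes Theorem \ref{Theorem:Main-Result} to produce $\phi$ and then invokes \cite{bland} to promote $\phi$ to an embedding suffices.
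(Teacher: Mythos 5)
Your proposal is correct and matches the paper's intended argument exactly: apply Theorem \ref{Theorem:Main-Result} to obtain a morphism $\phi : X \to Y$, then invoke the main result of \cite{bland} (with the entropy inequality and no-global-period hypothesis) to upgrade that morphism to an embedding. The paper itself treats this as an immediate consequence of those two results, so your compatibility-of-hypotheses check is precisely the routine verification the authors leave implicit.
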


The finite extension property has been defined previously for $\mathbb{Z}^d$ subshifts \cite{briceno_mcgoff_pavlov}, and it may be viewed as a particularly strong condition. Indeed, if $X$ has the FEP, then $X$ must be a strongly irreducible SFT (Proposition \ref{Prop:SIandSFT}). Nonetheless, there are many previously considered systems that have the FEP, including full shifts, SFTs with a safe symbol, and  SFTs with \textit{single-site fillability} (such as ``checkerboard" systems with sufficiently many colors). See \cite{briceno_mcgoff_pavlov,briceno,marcus_pavlov} for additional examples and discussion. In Section \ref{Sect:FEP} we also show that if $G$ is a countable amenable group and $X$ has the FEP, then $X$ must be entropy minimal (Proposition \ref{Prop:EntropyMinimality}). As a result, one may replace the last sentence of Corollary \ref{Cor:Embedding} with the following statement: $X$ is isomorphic to a strict subsystem of $Y$ if and only if $h(X) < h(Y)$.

In our proofs, we use that if a finitely generated group has polynomial growth, then it satisfies a property known as the doubling property (see Section \ref{Sect:CountableGroups} for details). Although the doubling property appears similar in spirit to the Tempelman condition \cite[Definition~1.1]{hochman}, we do not know how to make our proofs work with only the Tempelman condition in place of the doubling property.

Finally, we mention that Meyerovitch has recently established an embedding theorem for a class of subshifts of finite type over countable abelian groups \cite{meyerovitch}. In particular, \cite{meyerovitch} defines a property called the map extension property and then gives necessary and sufficient conditions for embedding an arbitrary $G$-subshift into a $G$-SFT with the map extension property. We note that by results of Poirier and Salo \cite{poirierandsalo}, the finite extension property is strictly more general than the map extension property (i.e., the map extension property implies the finite extension property, but the reverse is not true).

The paper is organized as follows. In Section \ref{Sect:Background} we provide all necessary definitions and background. Section \ref{Sect:FEP} contains our results concerning systems with the FEP. In Section \ref{Sect:Lemmas} we prove several preliminary results, 
in Section \ref{Sect:Proof} we present the proof of Theorem~\ref{Theorem:Main-Result}, and in Section \ref{Sect:Cor} we provide a short proof of Corollary~\ref{Cor:Embedding}.

\section{Background and notation} \label{Sect:Background}

\subsection{Countable groups} \label{Sect:CountableGroups}

Let $G$ be a countable group. If $K \subset G$ is a finite, symmetric set, then we let $\rho_K$ denote the standard left-invariant word metric on the subgroup generated by $K$, denoted by $\langle K \rangle$, and we let $B_n(K)$ denote the ball of radius $n$ centered at the identity with respect to $\rho_K$. Note that if $K$ contains the identity, then $B_n(K) = K^n$. We extend $\rho_K$ to all of $G$ by setting $\rho_K(g,h) = \rho_K(e,g^{-1}h)$ if $g^{-1}h \in$ {$\langle K\rangle$} and $\rho_K(g,h) = \infty$ otherwise. For any $g \in G$ and any nonempty finite set $S \subset G$, we also define
\begin{equation*}
\dist_K(g,S) = \min \{ \rho_K(g,s) : s \in S\}.
\end{equation*}

A group $G$ is finitely generated if there exists a finite subset $K$ such that $G = \langle K \rangle$, in which case we say that $K$ is a generating set. Furthermore, a finitely generated group $G$ is said to have \textit{polynomial growth} if there exists a finite generating set $K$ and constants $C,d > 0$ such that $|B_n(K)| \leq Cn^d$ for all $n \geq 1$. The infimum $d_0 \geq 0$ of all such $d$ for which this holds is called the \textit{order} of the polynomial growth of $G$. Though the growth function $f(n) = |B_n(K)|$ depends on the generating set $K$, the property of having polynomial growth does not depend on the specific choice of generating set.  

It is a classical theorem of Gromov that a finitely generated group $G$ has polynomial growth if and only if $G$ is virtually nilpotent \cite{gromov}. It follows that the order $d_0$ must be an integer and, moreover, there exists a $C > 0$ such that $n^{d_0}/C \leq |B_n(K)| \leq Cn^{d_0}$ for all $n \geq 1$.

\begin{definition} 
Suppose $G$ is finitely generated. We say that $G$ has the \textit{doubling property} if for any finite symmetric generating set $K \subset G$, there exists a constant $C$ depending on $K$ such that for any $n \geq 1$, we have
\begin{equation*}
|B_{2n}(K)|\leq C |B_n(K)|.
\end{equation*}
\end{definition}

\begin{remark}
It is easy to see that if $G$ has polynomial growth, then $G$ satisfies the doubling property. Indeed, if $n^d/C \leq |B_n(K)| \leq Cn^d$ for all $n$, then \[
    |B_{2n}(K)| \leq C(2n)^d = 2^dC^2(n^d/C) \leq 2^dC^2|B_n(K)|.
\] In fact, the converse is also true. If $G$ satisfies the doubling property, then $G$ must be of polynomial growth, by the following argument. 
Suppose $|B_{2n}(K)| \leq C |B_n(K)|$ for all $n \geq 1$, and suppose without loss of generality that $C \geq |B_1(K)|$. By induction, $|B_{2^n}(K)| \leq C^{n+1}$ for all $n \geq 0$. Now let $m \geq 1$ be arbitrary and set $n = \lfloor \log_2 m\rfloor$. We then have that \[
    |B_m(K)| \leq |B_{2^{n+1}}(K)| \leq C^{n+1} = C(2^n)^{\log_2 C} \leq Cm^{\log_2 C}.
\]
This demonstrates that $G$ must be of polynomial growth of order at most $\lfloor \log_2 C \rfloor$. We conclude that the doubling property is equivalent to polynomial growth. 
\end{remark}

We conclude this subsection with a final bit of notation. For any countable group $G$, if $K \subset G$ is finite and $E \subset G$, then we let
\begin{equation*}
\int_K(E) = \{ g \in E : gK \subset E\}.
\end{equation*}

\subsection{Symbolic dynamics}

Let $G$ be a countable group. For any finite nonempty set $\mathcal{A}$ endowed with the discrete topology, we endow $\Sigma(\mathcal{A}) = \mathcal{A}^G$ with the product topology and refer to it as the \textit{full shift} on alphabet $\mathcal{A}$. This makes $\Sigma(\mathcal{A})$ into a compact, metrizable space. For each $g \in G$, we define $\sigma^g : \Sigma(\mathcal{A}) \to \Sigma(\mathcal{A})$ by the following rule: for all $x \in \mathcal{A}^G$ and $h \in G$, let
\begin{equation*}
\sigma^gx (h) = x(g^{-1} h).
\end{equation*}
We note that for each $g \in G$ the map $\sigma^g : \Sigma(\mathcal{A}) \to  \Sigma(\mathcal{A})$ is a homeomorphism, and this collection of maps $\{\sigma^g : g \in G\}$ defines an action of $G$, called the \textit{shift action}. A set $Z \subset  \Sigma(\mathcal{A})$ is a \textit{subshift} if it is closed and shift-invariant (meaning that $\sigma^g(Z) = Z$ for all $g \in G$).

We refer to elements of $\mathcal{A}^S$ as \textit{patterns}, and if $w \in \mathcal{A}^S$, then we say that $w$ has \textit{shape} $S$. If $S' \subset S \subset G$ and $w \in \mathcal{A}^S$, then we let $w(S')$ denote the pattern $u \in \mathcal{A}^{S'}$ such that $u(s) = w(s)$ for all $s \in S'$. 
Additionally, for patterns $u \in \mathcal{A}^E$ and $v \in \mathcal{A}^F$ such that $u(g) = v(g)$ for all $g \in E \cap F$, we let $u \cup v$ be the pattern in $\mathcal{A}^{E \cup F}$ such that $(u \cup v)(g) = u(g)$ for all $g \in E$ and $(u \cup v) (g) = v(g)$ for all $g \in F$. If $E \cap F  = \varnothing$, then we may write $u \sqcup v$. 

Let $Z$ be a subshift. For a given pattern $w \in \A^S$ we define the \textit{cylinder set} $[w] = \{z\in Z : z(S) = w\}$.
Next we define 
\begin{equation*}
\mathcal{L}_S(Z) = \{ z(S) : z \in Z\},
\end{equation*}
and
\begin{equation*}
\mathcal{L}(Z) = \bigcup_{S \subset G} \mathcal{L}_S(Z).
\end{equation*}
We refer to $\mathcal{L}(Z)$ as the \textit{language} of $Z$. Note that we allow $S = \varnothing$, so that the empty word $\lambda$ is in $\mathcal{L}(Z)$, and we allow $S$ to be infinite, which differs from some definitions of the language of a subshift. We also define an action of $G$ on $\mathcal{L}(Z)$ as follows: for $g \in G$ and $w \in \mathcal{A}^S$, we let $g \cdot w$ be the element of $\mathcal{A}^{gS}$ such that $(g \cdot w)(gs) = w(s)$ for all $s\in S$.

Let $\mathcal{A}$ and $\mathcal{B}$ be alphabets, and let $Z$ be a subshift on $\mathcal{A}$.  For any nonempty finite subset $R \subset G$, a \textit{block map} with shape $R$ is any function $\Phi : \mathcal{L}_R(Z) \to \mathcal{B}$. To ease notation, if $S \subset G$ and $u \in \mathcal{A}^{SR}$, then we may let $\Phi(u)$ denote the pattern with shape $S$ such that $\Phi(u)(s) = \Phi(s^{-1} \cdot u(sR))$ for all $s \in S$. A function $\phi : Z \to \Sigma(\mathcal{B})$ is called a  \textit{sliding block code} if there exists a block map $\Phi$ with some shape $R$ such that for all $z \in Z$ and all $g \in G$, we have
\begin{equation*}
\bigl(\phi (z)\bigr) (g) = \Phi \bigl( g^{-1} \cdot z( gR) \bigr).
\end{equation*}
{If a block map $\Phi$ can be selected with shape $R = \{e\}$ (in other words, if $\Phi$ may be viewed as a function on the alphabets $\Phi : \mathcal{A} \to \mathcal{B}$), then $\phi$ is said to be a \textit{$1$-block code}.} 

A map $\phi : Z \to \Sigma(\mathcal{B})$ is shift-commuting if $\sigma^g \circ \phi = \phi \circ \sigma^g$ for all $g \in G$. By the Curtis-Hedlund-Lyndon Theorem \cite[Theorem~1.8.1]{springer_book}, a map $\phi : Z \to \Sigma(\mathcal{B})$ is a homomorphism (continuous and shift-commuting) if and only if $\phi$ is a sliding block code.

Suppose $X$ and $Y$ are subshifts and $\phi : X \to Y$ is a homomorphism. We say that $\phi$ is a \textit{factor map} whenever it is surjective, and we say that $\phi$ is an \textit{embedding} whenever it is injective. 

For a finite shape $K \subset G$ and a collection $\mathcal{F} \subset \mathcal{A}^K$, we let
\begin{equation*}
X_{\mathcal{F}} = \bigl\{ x\in \mathcal{A}^G : \forall g \in G, \, \sigma^g x (K) \notin \mathcal{F}\bigr\}.
\end{equation*}
A subshift $X \subset \mathcal{A}^G$ is a \textit{shift of finite type} (SFT) if there exists a finite shape $K \subset G$ and a collection $\mathcal{F} \subset \mathcal{A}^K$ such that $X = X_{\mathcal{F}}$.

A subshift $X$ is \textit{strongly irreducible} (SI) if there exists a finite set $K \subset G$ such that for all shapes $E, F \subset G$ and all patterns $u \in \mathcal{L}_E(X)$ and $v \in \mathcal{L}_F(X)$, if $EK \cap F = \varnothing$, then $u \cup v \in \mathcal{L}_{E \cup F}(X)$. 

For any point $x \in \mathcal{A}^G$ and any $g \in G \setminus \{e\}$, we say that $x$ has $g$ as a period if $\sigma^g x = x$. On the other hand, if $\sigma^g x = x$ implies that $g = e$, then we say that $x$ is \textit{aperiodic}. Furthermore, if a subshift $X$ consists entirely of aperiodic points, then we refer to the subshift as aperiodic. We note that some authors refer to such subshifts as \textit{strongly aperiodic}. Lastly, if there exists $g \in G \setminus \{e\}$ such that every point of $x$ has $g$ as a period, then we say that $X$ has a global period.

Next we define the finite extension property for subshifts over countable groups $G$. We note that this definition extends the definition of the finite extension property for $\mathbb{Z}^d$-subshifts given in \cite[Definition~2.11]{briceno_mcgoff_pavlov}. 

\begin{definition}
Suppose $G$ is a countable group. A $G$-subshift $X$ with alphabet $\mathcal{A}$ has the \textit{finite extension property} (FEP) if there exists a finite set $K \subset G$ containing the identity element $e$ and a collection $\mathcal{F} \subset \mathcal{A}^K$ such that $X = X_{\mathcal{F}}$ and for any $F \subset G$ and any pattern $w \in \mathcal{A}^{F}$, if there exists $w' \in \mathcal{A}^{FK}$ such that $w'(F) = w(F)$ and $g^{-1} \cdot w(gK) \notin \mathcal{F}$ for all $g \in F$, then there exists $x \in X$ such that $x(F) = w(F)$. 
\end{definition}

The finite extension property is a very strong property. Indeed, if $X$ has the FEP, then $X$ must be a strongly irreducible $G$-SFT (Proposition \ref{Prop:SIandSFT}).  Additionally, we note that the FEP is invariant under conjugacy (Proposition \ref{Prop:InvariantUnderConjugacy}).

\begin{remark} \label{Remark:FEP-Restatement}
Here we mention a useful restatement of the FEP.
Suppose $G$ is a countable group, the set $K \subset G$ is finite and contains the identity element $e$,  $\mathcal{F} \subset \mathcal{A}^{K}$, and $X = X_{\mathcal{F}}$. Then the following are equivalent:
\begin{itemize}
\item[(i)] $X$ has the finite extension property with shape $K$ and forbidden set $\mathcal{F}$;
\item[(ii)] for any $S \subset G$ and any $w \in \mathcal{A}^S$, if $w(S)$ contains no patterns from $\mathcal{F}$ (meaning $g^{-1} \cdot w(gK) \notin \mathcal{F}$ for all $g \in \int_K(S)$), then $w(\int_K(S)) \in \mathcal{L}(X)$.
\end{itemize}
\end{remark}

\subsection{Entropy and invariant measures}

Let $G$ be an amenable group, and fix a F{\o}lner sequence $\{F_n\}_{n=1}^{\infty}$. Then 
for any nonempty $G$-subshift $X$, the topological entropy of $X$ is given by
\begin{equation*}
h(X) = \lim_{n \to \infty} \frac{1}{|F_n|} \log |\mathcal{L}_{F_n}(X)|.
\end{equation*}
We note that this limit exists and is independent of the choice of F{\o}lner sequence (see the book \cite{kerr_li}). A subshift $X$ is \textit{entropy minimal} if all of its proper subsystems have strictly less entropy, i.e., for all subshifts $Y \subsetneq X$, we have $h(Y) < h(X)$.

Let $X$ be a $G$-subshift.
We let $M(X,\sigma)$ denote the set of Borel probability measures $\mu$ on $X$ such that $\mu(\sigma^g(A)) = \mu(A)$ for all $g \in G$ and all Borel sets $A$. Recall that for any pattern $u \in \mathcal{L}_F(X)$, we let $[u]$ denote the cylinder set defined by the condition $x(F) = u$.
 For such a measure $\mu$,  the entropy of $\mu$ is given by
\begin{equation*}
h(\mu) = \lim_{n \to \infty} \frac{1}{|F_n|} \sum_{u \in \mathcal{L}_{F_n}(X)} - \mu([u]) \log \mu([u]),
\end{equation*}
where again the limit exists and is independent of the choice of F{\o}lner sequence (\cite{kerr_li}).
By the Variational Principle (see \cite{kerr_li}), we have
\begin{equation*}
h(X) = \sup_{\mu \in M(X,\sigma)} h(\mu).
\end{equation*}
Moreover, since $X$ is a subshift, it is known that there exists $\mu \in M(X,\sigma)$ such that $h(\mu) = h(X)$ (see \cite{kerr_li}). Such measures are called \textit{measures of maximal entropy}.

\section{Properties of subshifts with the FEP} \label{Sect:FEP}

In this section we establish some basic properties of subshifts with the FEP. First, we show that the FEP is invariant under conjugacy.

\begin{prop} \label{Prop:InvariantUnderConjugacy}
Suppose $G$ is a countable group, $X$ and $Y$ are $G$-subshifts, and $X$ and $Y$ are topologically conjugate. Then $X$ has the FEP if and only if $Y$ has the FEP.
\end{prop}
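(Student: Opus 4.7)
The plan is to convert the FEP witness of $X$ into an FEP witness for $Y$ via the block maps of the conjugacy. By symmetry it suffices to show that if $X$ has the FEP, then so does $Y$. I will invoke the Curtis--Hedlund--Lyndon theorem to obtain the block maps $\Phi$ for $\phi$ and $\Psi$ for $\phi^{-1}$, with respective shapes $R_\phi \ni e$ and $R_{\phi^{-1}} \ni e$, and then extend them arbitrarily to sliding block codes $\tilde\Phi : \A^G \to \B^G$ and $\tilde\Psi : \B^G \to \A^G$ on the full shifts. The key observation I will exploit is that the composition $\tilde\Phi \circ \tilde\Psi$ is a sliding block code of shape $R_\phi R_{\phi^{-1}}$ whose restriction to $Y$ is the identity; consequently, its associated block map sends every pattern in $\mathcal{L}_{R_\phi R_{\phi^{-1}}}(Y)$ to its center symbol, although its behavior on patterns outside this language is arbitrary.

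Assuming $X$ has the FEP with shape $K \ni e$ and forbidden set $\F \subset \A^K$, my proposed FEP data for $Y$ will be the shape $K' := R_\phi K K R_{\phi^{-1}}$ together with the forbidden set $\F' \subset \B^{K'}$ consisting of those patterns $v$ satisfying either (a) $v|_{R_\phi R_{\phi^{-1}}} \notin \mathcal{L}_{R_\phi R_{\phi^{-1}}}(Y)$, or (b) the decoded pattern $\tilde\Psi v$ (well-defined on $R_\phi K K$) contains an $\F$-pattern centered at some point of $R_\phi K$. By Remark~\ref{Remark:FEP-Restatement}, the task then reduces to showing: if $v \in \B^S$ contains no $\F'$-patterns, then $v(\int_{K'}(S)) \in \mathcal{L}(Y)$.

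I would verify this in three steps. First, I decode: set $u := \tilde\Psi v \in \A^T$ for $T := \int_{R_{\phi^{-1}}}(S)$, and observe that failure of (b) at each $g_0 \in \int_{K'}(S)$ propagates (via shift-equivariance of $\tilde\Psi$) to make $u$ be $\F$-free on $\int_{K'}(S) \cdot R_\phi K$. Second, I apply the FEP of $X$ to $u|_{T_0}$ with $T_0 := \int_{K'}(S) \cdot R_\phi K \subset T$: since $\int_K(T_0) \supset \int_{K'}(S) \cdot R_\phi$ and $u|_{T_0}$ is $\F$-free on $\int_K(T_0)$, I obtain $x \in X$ agreeing with $u$ on $\int_K(T_0)$, and in particular on $\int_{K'}(S) \cdot R_\phi$. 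Third, I set $y := \phi(x) \in Y$; for every $g \in \int_{K'}(S)$ we then have $y(g) = \Phi(g^{-1} x(gR_\phi)) = (\tilde\Phi \tilde\Psi v)(g)$, and failure of (a) at $g$ gives $g^{-1} v(gR_\phi R_{\phi^{-1}}) \in \mathcal{L}_{R_\phi R_{\phi^{-1}}}(Y)$, forcing $(\tilde\Phi \tilde\Psi v)(g) = v(g)$ by the key observation. Thus $y$ witnesses $v(\int_{K'}(S)) \in \mathcal{L}(Y)$.

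The hardest part will be the correct calibration of $K'$ and $\F'$. The shape $K'$ must be large enough that $\F'$-freeness at a single point propagates $\F$-freeness of the decoded $u$ over a $K$-neighborhood sizeable enough to absorb the $R_\phi$-expansion introduced when re-encoding via $\phi$; simultaneously, condition (a) is essential for ensuring that $\tilde\Phi \tilde\Psi$ acts as the identity on $v$ \emph{locally}, not only on points of $Y$, which is what bridges ``$x$ extends $\tilde\Psi v$'' to ``$\phi(x)$ agrees with $v$ on $\int_{K'}(S)$.''
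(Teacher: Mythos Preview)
Your proposal is correct and follows the same strategy as the paper: decode via the block map of $\phi^{-1}$, apply the FEP of $X$, then re-encode via $\phi$. Two differences are worth flagging. First, your shape $K' = R_\phi K^2 R_{\phi^{-1}}$ carries one more factor of $K$ than the paper's $K' = R_0 K R_1$, and correspondingly your condition~(b) checks $\mathcal{F}$-freeness at points of $R_\phi K$ rather than just $R_\phi$; this is harmless but unnecessary, since in Step~2 you only need $x$ to agree with $u$ on $\int_{K'}(S)\,R_\phi$, and the direct form of the FEP with $F = \int_{K'}(S)\,R_\phi$ already yields this from $\mathcal{F}$-freeness on $F$ alone. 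Second, and more interestingly, your condition~(a) together with the explicit extension $\tilde\Phi,\tilde\Psi$ addresses a point the paper's proof glosses over: the paper asserts $\Phi_0(\Phi_1(u(ER_0R_1))) = u(E)$ for an \emph{arbitrary} pattern $u$, but the composition of the block maps is only guaranteed to act as the identity on patterns in $\mathcal{L}(Y)$, and arbitrary extensions of $\Phi_0,\Phi_1$ to the full alphabet need not compose to the identity elsewhere. Your condition~(a) forces the relevant $R_\phi R_{\phi^{-1}}$-subpatterns of $v$ into $\mathcal{L}(Y)$, which is precisely what justifies that step. So your argument is in fact the more careful of the two.
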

\begin{proof}
Suppose $X$ has the FEP with shape $K$ and forbidden set $\mathcal{F} \subset \mathcal{A}_X^K$. Suppose $\phi : X \to Y$ is a topological conjugacy. Suppose $\phi$ has a block map $\Phi_0$ with block shape $R_0$ and $\phi^{-1}$ has a block map $\Phi_1$ with block shape $R_1$. We suppose without loss of generality that $R_0$ and $R_1$ each contain the identity.

Let $K' = R_0KR_1$, and let $\mathcal{F}' \subset \mathcal{A}_Y^{K'}$ be the set of all patterns $u$ such that there exists $g \in R_0$ with the property that $\Phi_1(g^{-1} \cdot u(gK)) \in \mathcal{F}$. We claim that $Y$ has the FEP with shape $K'$ and forbidden set $\mathcal{F}'$. 

To establish this claim, let $E \subset G$, and suppose that $u \in \mathcal{A}_Y^{EK'}$ has the property that for all $g \in E$, we have $g^{-1} \cdot u(gK') \notin \mathcal{F}'$. Let $v = \Phi_1(u) \in \mathcal{A}_X^{ER_0K}$. By the definition of $\mathcal{F}'$ and our assumption on $u$, for all $g \in ER_0$, we have $g^{-1} \cdot v(gK) = \Phi_1(g^{-1} \cdot u(g K') ) \notin \mathcal{F}$. Since $X$ has the FEP with shape $K$ and forbidden set $\mathcal{F}$, there exists a point $x \in X$ such that $x(ER_0) = v(ER_0)$. Now let $y = \phi(x) \in Y$. Furthermore, observe that $y(E) = \Phi_0(v(ER_0)) = \Phi_0( \Phi_1(u(ER_0R_1))) = u(E)$, where we have used that $\Phi_0$ and $\Phi_1$ are the block codes for the inverse maps $\phi$ and $\phi^{-1}$, respectively. Hence $u(E) \in \mathcal{L}(Y)$, and thus we have established that $Y$ has the FEP, which concludes the proof.
\end{proof}

Next we establish that any subshift with the FEP is a strongly irreducible SFT.

\begin{prop} \label{Prop:SIandSFT}
If $X$ is a $G$-subshift with the FEP, then $X$ is a strongly irreducible $G$-SFT.
\end{prop}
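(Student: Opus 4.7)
The plan is to extract both the SFT and SI conclusions directly from the FEP witnesses $K$ and $\mathcal{F}$, using the standard convention that $\mathcal{F}$ is disjoint from $\mathcal{L}_K(X)$.

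For the SFT assertion, I would define $X' := \{x \in \mathcal{A}^G : g^{-1}\cdot x(gK) \notin \mathcal{F} \text{ for every } g \in G\}$, the SFT determined by the shape $K$ and the forbidden set $\mathcal{F}$, and show $X = X'$. The containment $X \subseteq X'$ is immediate, since each $K$-window of a point of $X$ belongs to $\mathcal{L}_K(X)$ and therefore avoids $\mathcal{F}$. For the reverse containment, given $x \in X'$ and any finite $F \subseteq G$, the restriction $w' := x|_{FK}$ extends $w := x|_F$ and satisfies $g^{-1}\cdot w'(gK) \notin \mathcal{F}$ for every $g \in F$, so the FEP yields $w \in \mathcal{L}(X)$. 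Since this holds for all finite $F$, closedness of $X$ in the product topology forces $x \in X$.

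For the SI assertion, I would take the separation shape $K' := K K^{-1}$. A short computation shows that $E_1 K' \cap E_2 = \varnothing$ implies $E_1 K \cap E_2 K = \varnothing$: any coincidence $e_1 k_1 = e_2 k_2$ with $e_i \in E_i$ and $k_i \in K$ forces $e_2 \in E_1 K K^{-1}$. Given $u \in \mathcal{L}_{E_1}(X)$ and $v \in \mathcal{L}_{E_2}(X)$ realized by points $x_u, x_v \in X$, the disjointness of $E_1 K$ and $E_2 K$ lets me define an extension $w' \in \mathcal{A}^{(E_1 \cup E_2)K}$ unambiguously by $w'|_{E_1 K} := x_u|_{E_1 K}$ and $w'|_{E_2 K} := x_v|_{E_2 K}$. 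Every $K$-window of $w'$ anchored at a point of $E_1 \cup E_2$ is literally a $K$-window of $x_u$ or of $x_v$, hence belongs to $\mathcal{L}_K(X) \setminus \mathcal{F}$, and the FEP then produces a point of $X$ restricting to $u \cup v$.

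The only delicate piece of bookkeeping is the separation calculation that ensures $K'$ is large enough to decouple the $K$-neighborhoods of $E_1$ and $E_2$; once that is in place, both assertions follow from a clean invocation of the FEP definition.
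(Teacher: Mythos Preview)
Your argument is correct and follows the same overall plan as the paper: use the FEP witnesses $(K,\mathcal{F})$ to show that $X$ coincides with the SFT they define, and then glue two admissible patterns via the FEP to obtain strong irreducibility. There are two small technical differences worth noting. For the SFT direction, the paper applies the FEP directly with $F = G$ (the definition allows infinite $F$), whereas you restrict to finite $F$ and invoke closedness; both are fine. For strong irreducibility, the paper first enlarges $K$ to be symmetric, takes the separation shape $K^4$, and verifies that \emph{every} $K$-window of the extension $w = u' \cup v'$ lying inside $EK \cup FK$ is in $\mathcal{L}(X)$ (as in the restatement of Remark~\ref{Remark:FEP-Restatement}); ruling out windows that straddle $EK$ and $FK$ is what forces the $K^4$ gap. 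You instead apply the FEP definition directly, checking only windows anchored at points of $E_1 \cup E_2$, which lets you get away with the smaller separation shape $KK^{-1}$ and avoids the symmetry assumption. Your route is a bit sharper; the paper's is perhaps more in line with how the FEP is used elsewhere via the interior formulation. Both rely, as you correctly flag, on the convention that $\mathcal{F} \cap \mathcal{L}_K(X) = \varnothing$, which the paper leaves implicit.
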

\begin{proof}
Suppose $X$ has the FEP witnessed by a shape $K$ (containing the identity) and a set of forbidden patterns $\mathcal{F} \subset \mathcal{A}^K$. Passing to a larger shape (with a corresponding set of forbidden patterns) if necessary, we assume without loss of generality that $K$ is symmetric. Note that $X$ is a $G$-SFT since $X = X_{\mathcal{F}}$.

Next we claim that $X$ is strongly irreducible with shape $K^4$. Let $E,F \subset G$ be shapes such that $EK^4 \cap F = \varnothing$. Let $u \in \mathcal{L}_E(X)$ and $v \in \mathcal{L}_F(X)$. Since $u$ and $v$ are in the language of the subshift, there exist extensions $u' \in \mathcal{L}_{EK}(X)$ and $v' \in \mathcal{L}_{FK}(X)$. Note that $EK \cap FK = \varnothing$, since $EK^2 \cap F = \varnothing$. Therefore $w = u' \cup v'$ is a well-defined pattern on $EK \cup FK = (E \cup F) K$. We claim that every pattern in $w$ of shape $gK$ is in the language of $X$, and therefore $w$ contains no (translates of) patterns from $\mathcal{F}$. Let $gK \subset EK \cup FK$. If $gK \subset EK$, then $w(gK) = u'(gK) \in \mathcal{L}(X)$, and similarly if $gK \subset FK$, then $w(gK) = v'(gK) \in \mathcal{L}(X)$. If $gK \cap EK \neq \varnothing$ and $gK \cap FK \neq \varnothing$, then $g \in EK^2$ and there exists $f \in F$ such that $f \in gK^2 \subset EK^4$, which is impossible since $EK^4 \cap F = \varnothing$. Thus, $w$ contains no (translates of) forbidden patterns from $\mathcal{F}$. By the FEP, $u \cup v = w(E \cup F) \in \mathcal{L}(X)$, and hence there is a point $x \in X$ such that $x(E) = u$ and $x(F) = v$, as desired.
\end{proof}

In order to prove that the FEP implies entropy minimality (Proposition \ref{Prop:EntropyMinimality} below), we make use of a result of \cite{hedges} that refers to \textit{extender sets}. Given a pattern $u \in \mathcal{L}_S(X)$, its extender set $E(u)$ is defined as the set of patterns $\eta \in \mathcal{L}_{S^c}(X)$ such that $u \cup \eta \in X$. First we require a lemma that gives a sufficient condition for equality of extender sets of two patterns.

\begin{lemma}
Suppose $X$ is an SFT with forbidden shape $K$ containing the identity. Let $S \subset G$, and let $\partial_{K^{-1}K}(S) = SK^{-1}K \setminus S$. If $u,v \in \mathcal{L}_{SK^{-1}K}(X)$ and $u(\partial_{K^{-1}K}(S)) = v(\partial_{K^{-1}K}(S))$, then $E(u) = E(v)$.
\end{lemma}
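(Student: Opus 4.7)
The plan is to show $E(u) \subseteq E(v)$; by symmetry in $u$ and $v$, the reverse containment then follows and we get equality. Fix $\eta \in E(u)$, so that $u \cup \eta \in X$, and set $y = v \cup \eta$, which is well-defined since $v$ and $\eta$ have complementary shapes $SK^{-1}K$ and $(SK^{-1}K)^c$. Since $X$ is an SFT with forbidden shape $K$ and forbidden set $\mathcal{F}$, establishing $y \in X$ reduces to verifying that $g^{-1} \cdot y(gK) \notin \mathcal{F}$ for every $g \in G$.

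The verification splits into two cases depending on whether $gK$ meets $S$. If $gK \cap S \neq \varnothing$, then some $k \in K$ satisfies $gk \in S$, so $g \in SK^{-1}$, and hence $gK \subseteq SK^{-1}K$. In this case $y(gK) = v(gK)$, and since $v \in \mathcal{L}_{SK^{-1}K}(X)$ extends to a point of $X$, the pattern $v(gK)$ lies in the language of $X$ and thus cannot be forbidden. If instead $gK \cap S = \varnothing$, then $gK$ lies entirely in $S^c$, which decomposes as $\partial_{K^{-1}K}(S) \sqcup (SK^{-1}K)^c$. On the first piece $u$ and $v$ agree by hypothesis, and on the second piece both $u \cup \eta$ and $v \cup \eta$ restrict to $\eta$. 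Therefore $y(gK) = (u \cup \eta)(gK)$, which is again not forbidden because $u \cup \eta \in X$.

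Combining the two cases, $y$ contains no forbidden $K$-translate, so $y = v \cup \eta \in X$ and $\eta \in E(v)$. The hypothesis $e \in K$ is used implicitly: it guarantees $S \subseteq SK^{-1}K$ (so $v$ actually extends a pattern on $S$) and it ensures that the case $g \in S$ falls into the first branch of the split. I do not anticipate a substantive obstacle here; the content of the lemma is really just the verification that the thickening $SK^{-1}K$ is chosen large enough that every $K$-translate straddling $S$ and $S^c$ is pinned down by the values on $\partial_{K^{-1}K}(S)$. This is encapsulated by the equivalence $gK \cap S \neq \varnothing \iff g \in SK^{-1}$, which is the observation that makes the case analysis clean.
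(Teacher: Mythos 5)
Your proof is correct and follows essentially the same route as the paper's: introduce $y = v \cup \eta$, split on whether $gK$ meets $S$, and in each case identify $y(gK)$ with a pattern already known to be in the language. The paper phrases the dichotomy as ``$gK \subset S^c$ or $gK \subset SK^{-1}K$,'' which is equivalent to your ``$gK \cap S = \varnothing$ or $g \in SK^{-1}$,'' so there is no substantive difference.
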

\begin{proof}
First we claim that for every $g \in G$, either $gK \subset S^c$ or $gK \subset S \cup \partial_{K^{-1}K}(S)$. To see this, suppose that $gK$ is not contained in $S^c$. Then there exists $f \in gK \cap S$. Hence $g \in S K^{-1}$, so $gK \subset S K^{-1} K = S \cup \partial_{K^{-1}K}(S)$.

Let $\eta \in E(u)$. Let $x = u \cup \eta \in X$, and let $y = v \cup \eta \in \mathcal{A}^G$. 
Now we claim that $\eta \in E(v)$. Let $g \in G$. If $gK \subset S \cup \partial_{K^{-1}K}(S) = SK^{-1}K$, then $y(gK) =  v(gK) \in \mathcal{L}(X)$. Now suppose $gK \subset S^c = \partial_{K^{-1}K}(S) \cup (SK^{-1}K)^c$. Then $y(gK) = v(\partial_{K^{-1}K}(S)) \cup \eta((SK^{-1}K)^c) = v(\partial_{K^{-1}K}(S)) \cup \eta((SK^{-1}K)^c) \in \mathcal{L}(X)$. Since $X$ is an SFT with forbidden shape $K$, we obtain that $y \in X$, and hence $\eta \in E(v)$. Repeating the argument with the roles of $u$ and $v$ reversed, we conclude that $E(u) = E(v)$.  
\end{proof}

It is well-known that strongly irreducible $\mathbb{Z}^d$-SFTs are entropy minimal \cite[Lemma 4.1]{meester_steif}. 
The following proposition extends this result to the set of countable amenable groups $G$. Our proof follows the same strategy as the proof in \cite{meester_steif}, but we use a recent result of \cite{hedges} in place of the result of  \cite{burton_steif_NewResults} concerning measures of maximal entropy.

\begin{prop} \label{Prop:EntropyMinimality}
If $G$ is a countable amenable group and $X$ is a strongly irreducible $G$-SFT, then $X$ is entropy minimal. In particular, if $X$ has the FEP, then $X$ is entropy minimal.
\end{prop}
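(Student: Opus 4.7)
The plan is to prove the main statement by contradiction, following the Meester-Steif strategy \cite{meester_steif} but using Hedges' extender-set result \cite{hedges} in place of Burton-Steif. The ``in particular'' clause then follows immediately from Proposition \ref{Prop:SIandSFT}.

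For the setup, assume $Y \subsetneq X$ is a proper subshift with $h(Y) = h(X)$; then any measure of maximal entropy $\mu \in M(Y,\sigma)$ is also a measure of maximal entropy on $X$ when regarded as a measure on $X$ supported on $Y$. Fix a finite $S_w \subset G$ and a pattern $w \in \mathcal{L}_{S_w}(X) \setminus \mathcal{L}_{S_w}(Y)$, so $\mu([w]) = 0$. Let $K \subset G$ be a finite set containing $e$ that jointly witnesses both the SFT property and strong irreducibility of $X$ (taking the union of the two given sets if necessary).

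For the core construction, choose a finite shape $S$ with $S_w K \subset S$, and set $\partial = SK^{-1}K \setminus S$. For each $\alpha \in \mathcal{L}_\partial(Y)$, strong irreducibility (applicable since $S_w K \cap \partial = \varnothing$) combines $w$ and $\alpha$ into a pattern in $\mathcal{L}_{S_w \cup \partial}(X)$, which then extends to some $v_\alpha \in \mathcal{L}_{SK^{-1}K}(X)$ with $v_\alpha(\partial) = \alpha$ and $v_\alpha(S_w) = w$; note $\mu([v_\alpha]) \leq \mu([w]) = 0$. By the lemma preceding the proposition, any $v' \in \mathcal{L}_{SK^{-1}K}(X)$ with $v'(\partial) = \alpha$ satisfies $E(v') = E(v_\alpha)$; Hedges' result that MMEs on SFTs assign equal measure to patterns with equal extender sets then gives $\mu([v']) = \mu([v_\alpha]) = 0$. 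Summing over such $v'$ yields $\mu([\alpha]) = 0$, and summing over $\alpha \in \mathcal{L}_\partial(Y)$ (which exhausts the $\mu$-measure of $X$) yields $1 = 0$, a contradiction.

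The main obstacle will be invoking Hedges' theorem in precisely the form ``equal extender sets imply equal $\mu$-measure for every measure of maximal entropy $\mu$ on the SFT $X$.'' If \cite{hedges} states this only for ergodic MMEs or under slightly different hypotheses, the argument may need to pass through an ergodic decomposition or invoke the uniqueness of MMEs on strongly irreducible SFTs. Otherwise, all remaining steps are routine consequences of the preceding lemma, strong irreducibility, and the fact that cylinders on a fixed finite shape partition $X$ modulo $\mu$-null sets.
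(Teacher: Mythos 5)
Your proof is correct and follows essentially the same strategy as the paper's: use strong irreducibility to splice a forbidden pattern into a larger pattern sharing a boundary annulus (hence the same extender set, by the preceding lemma), then invoke \cite{hedges} to equate cylinder measures and derive a contradiction with the maximality of $\mu$. The only cosmetic difference is that you sum over all boundary patterns $\alpha$ to reach $1=0$, whereas the paper fixes a single point $y$ in the support of $\mu$ and contradicts positivity of a single cylinder's measure.
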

\begin{proof}
Suppose $K$ is a finite set witnessing both the strong irreducibility and the SFT property for $X$.  
Assume without loss of generality that $K$ contains the identity and $K^{-1} = K$.
Let $\mathcal{F} \subset \mathcal{A}^K$ be a collection of forbidden patterns that defines $X$. 
Let $F \subset G$ and suppose for contradiction that forbidding $u_* \in \mathcal{L}_F(X)$ from $X$ yields another SFT $Y$ such that $h(Y) = h(X)$. Let $\mu$ be a measure of maximal entropy on $Y$. Let $y \in Y$ be in the topological support of $\mu$. Let $v = y(FK^3)$. By the choice of $K$ (witnessing that $X$ is SI), there exists $x \in X$ such that $x(F) = u_*$ and $x((FK)^c) = y((FK)^c)$. Let $u = x(FK^3)$ and $v = y(FK^3)$.

Let $S = FK$, and note that $u( \partial_{K^{-1}K}(S)) = v(\partial_{K^{-1}K}(S))$. Then by the previous lemma, we obtain that the extender sets of $u$ and $v$ satisfy $E(u) = E(v)$.
Now by \cite[Corollary~3.19]{hedges}, we have that $\mu([u]) = \mu([v])$. Then
\begin{equation*}
\mu([u_*]) \geq \mu( [u]) = \mu([v]) >0,
\end{equation*}
where the first inequality is due to monotonicity of measure and the second inequality holds since $y$ is in the topological support of $\mu$ and $[v]$ is an open set containing $y$.
However, the previous display leads to a contradiction, since $\mu$ is supported on $Y \subset X \setminus [u^*]$ and thus $\mu([u_*]) = 0$.
\end{proof}

\section{Preliminary quasi-tiling results} \label{Sect:Lemmas}

Quasi-tilings have played a significant role in recent work on symbolic dynamics (e.g., see \cite{bland,
bland2023subsystem,
downarowicz_huczek_dynamical-quasitilings,
downarowicz2019tilings,
downarowicz_zhang-symb-ext,
frisch2017symbolic,
huczek_kopacz,
mcgoff2021ubiquity}). 
Here we present the following general definition of quasi-tilings, which is suitable for our purposes.
\begin{definition}
    Let $\mathcal{S} = \{S_1,\dots,S_r\}$ be a finite collection of nonempty, finite subsets of $G$, referred to as shapes. A \textit{quasi-tiling} of $G$ is an assignment of each shape $S \in \mathcal{S}$ to a subset $C_S \subset G$, referred to as the center set for $S$, such that the collection of sets $\{C_S : S \in \mathcal{S}\}$ is pairwise disjoint and the map $(S,c) \to cS$ is injective over $\{(S,c) : S \in \mathcal{S} \text{ and } c \in C_S\}$. For $S \in \mathcal{S}$ and $c \in C_S$, the set $cS$ is called the tile with center $c$ and shape $S$.
\end{definition}

In this work we only require quasi-tilings with a single shape, and therefore we introduce some more specific notation.
Recall that there is a standard bijection between subsets of $G$ and elements of the full shift $\Sigma(\{0,1\})$, given by mapping a subset $\mathcal{C} \subset G$ to its characteristic function $\chi_{\mathcal{C}}$ (viewed as an element of $\Sigma(\{0,1\})$). Also, we note that $\max(\chi_{A}, \chi_B) = \chi_{A \cup B}$ and $\min(\chi_A, \chi_B) = \chi_{A \cap B}$, and therefore the operations of taking unions or intersections of sets can be viewed as  $1$-block codes from $\Sigma \times \Sigma$ to $\Sigma$. 

\begin{definition}
Let $F$ be a finite subset of $G$. For any $z = \chi_{\mathcal{C}} \in \Sigma(\{0,1\})$, we say that $z$ is $F$-\textit{disjoint} if the collection $\{c F\}_{c \in \mathcal{C}}$ is pairwise disjoint, and $z$ is $F$-\textit{covering} if the collection $\{c F \}_{c \in \mathcal{C}}$ covers $G$. Furthermore, for any subshift $Z \subset \Sigma(\{0,1\})$, we say that $Z$ is $F$-disjoint if each element of $Z$ is $F$-disjoint, and similarly $Z$ is $F$-covering if each element of $Z$ is $F$-covering.
\end{definition}

The use of quasi-tilings for the purposes of studying the dynamics of amenable group actions dates back to the fundamental construction of Ornstein and Weiss \cite[I.\textsection 2~Theorem~6]{ornstein_weiss}. A dynamical version of this construction was recently obtained by Downarowicz and Huczek \cite[Lemma~3.4]{downarowicz_huczek_dynamical-quasitilings}, which derives a system of quasi-tilings as a factor of a given aperiodic subshift. In the following proposition we construct quasi-tilings suitable for our purposes, adapting the first part of the proof of \cite[Lemma~3.4]{downarowicz_huczek_dynamical-quasitilings} (here we require strict disjointness of the tiles, whereas in \cite{downarowicz_huczek_dynamical-quasitilings} some small overlap is allowed). We provide a detailed proof for clarity and convenience.

\begin{prop} \label{Prop:Quasi}
Let $G$ be a countable group. Suppose $X$ is a nonempty aperiodic $G$-subshift and $F \subset G$ is a nonempty finite subset. Then there exists a homomorphism  $\phi : X \to \Sigma(\{0,1\})$ such that $\phi(X)$ is $F$-disjoint and $FF^{-1}$-covering.
\end{prop}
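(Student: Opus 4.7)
My plan is to reduce the proposition to a marker-lemma construction. Specifically, I will produce a clopen set $U \subset X$ satisfying
\[
\sigma^g U \cap U = \varnothing \text{ for all } g \in FF^{-1}\setminus\{e\}, \qquad X = \bigcup_{h \in FF^{-1}} \sigma^h U.
\]
Given such a $U$, I define $\phi : X \to \Sigma(\{0,1\})$ by $\phi(x)(g) = 1$ iff $\sigma^{g^{-1}} x \in U$. This map is continuous (since $U$ is clopen and the cylinders at $g$ pull back to $\sigma^{g^{-1}}$-translates of clopen sets) and shift-commuting by construction, hence a sliding block code by Curtis--Hedlund--Lyndon. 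Writing $\phi(x) = \chi_{\mathcal{C}(x)}$, the disjointness property transfers to: distinct $c, c' \in \mathcal{C}(x)$ satisfy $c^{-1}c' \notin FF^{-1}\setminus\{e\}$, i.e.\ $cF \cap c'F = \varnothing$, so $\phi(X)$ is $F$-disjoint; and the covering property says every $g \in G$ has some $h \in FF^{-1}$ with $gh \in \mathcal{C}(x)$, giving the $FF^{-1}$-cover (using that $e \in FF^{-1}$ since $F$ is nonempty).

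To construct $U$, I will combine aperiodicity with compactness. Aperiodicity of $X$ guarantees that, for each $x \in X$, the points $\{\sigma^g x : g \in FF^{-1}\}$ are pairwise distinct; since $X$ is Hausdorff with a clopen basis (being a subshift), I can choose a clopen neighborhood $V_x$ of $x$ so small that $\sigma^g V_x \cap V_x = \varnothing$ for every $g \in FF^{-1}\setminus\{e\}$. Compactness of $X$ then yields a finite subcover $V_{x_1},\dots,V_{x_n}$. I then greedily purge overlaps: set $U_1 = V_{x_1}$ and inductively
\[
U_{k+1} = V_{x_{k+1}} \;\setminus\; \bigcup_{i \leq k}\,\bigcup_{g \in FF^{-1}} \sigma^g U_i,
\]
and finally put $U = \bigcup_{k=1}^{n} U_k$. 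Each $U_k$ is clopen (we remove a finite union of translates of clopen sets from a clopen set), so $U$ is clopen.

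The main obstacle is verifying the disjointness condition on $U$. Suppose $y \in \sigma^g U \cap U$ with $g \in FF^{-1}\setminus\{e\}$, and pick $i,j$ with $y \in U_i$ and $\sigma^{g^{-1}}y \in U_j$. If $j < i$, then $y = \sigma^{g}(\sigma^{g^{-1}}y) \in \sigma^g U_j$, but such points were explicitly removed from $V_{x_i}$ when forming $U_i$, a contradiction; if $j > i$, the symmetric argument applies after noting that $g^{-1} \in FF^{-1}$ by symmetry of $FF^{-1}$; and the case $j = i$ contradicts $\sigma^g V_{x_i} \cap V_{x_i} = \varnothing$ since both $y$ and $\sigma^{g^{-1}}y$ lie in $U_i \subset V_{x_i}$. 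The covering property is then nearly immediate from the subcover: given $y \in X$, pick the smallest $k$ with $y \in V_{x_k}$; either $y \in U_k$, in which case $y \in \sigma^e U$, or $y$ was removed at stage $k$, meaning $y \in \sigma^g U_i$ for some $i < k$ and some $g \in FF^{-1}$. All remaining steps—confirming that $\phi$ takes values in $\Sigma(\{0,1\})$, is continuous and shift-commuting, and translating the two properties of $U$ back into $F$-disjointness and $FF^{-1}$-covering of $\mathcal{C}(x)$—are routine.
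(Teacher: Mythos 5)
Your proof is correct and is essentially the same greedy-marker construction as the paper's: both use aperiodicity to find, for each point, a clopen neighborhood disjoint from its own small translates, pass to a finite subcover by compactness, and then thin the cover greedily to produce the centers of the quasi-tiling. The only real difference is presentational: you build a single clopen marker set $U$ and obtain the sliding-block-code property at once from continuity of $x \mapsto \mathbf{1}_{U}(\sigma^{g^{-1}}x)$ via Curtis--Hedlund--Lyndon, whereas the paper constructs the center set $\mathcal{C}_n(x)$ pointwise and then verifies the block-code property by an explicit induction; your packaging is a bit cleaner but encodes the same construction.
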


\begin{proof}
Let $x_0\in X$ be arbitrary. Since $x_0$ is aperiodic, there must exist a finite subset $S_0$ such that $x_0(S_0) \neq \sigma^fx_0(S_0)$ for all $f\in FF^{-1}\setminus\{e\}$. Then choose $S_1 = FF^{-1}S_0$ and let $u_{x_0} = x_0(S_1)$. For any $x_1 \in [u_{x_0}]$ it must therefore hold that $x_1(S_0) \neq \sigma^fx_1(S_0)$ for all $f\in FF^{-1}\setminus\{e\}$, and hence  $[u_{x_0}] \cap \sigma^{f^{-1}}[u_{x_0}] = \varnothing$ for all $f\in FF^{-1}\setminus\{e\}$. Repeat this construction for all possible $x = x_0$; the collection $\{[u_x] : x\in X\}$ is an open cover of $X$, and therefore by compactness we may find a finite subcover $\mathcal{U} = \{[u_1],\ldots,[u_n]\}$ for some $n \geq 1$.

Now let $x\in X$ be fixed. We shall construct a corresponding point $z \in \Sigma(\{0,1\})$. Let $\mathcal{C}_0 = \varnothing$ and for each $i\in[1,n]$ inductively define \[
    \mathcal{C}_{i} = \{g\in G : \sigma^{g^{-1}}x\in[u_i] \text{ and } gFF^{-1} \cap \mathcal{C}_{i-1} = \varnothing\} \cup \mathcal{C}_{i-1}.
\] Let $z = \chi_{\mathcal{C}_n} \in \Sigma(\{0,1\})$. We claim that the map $\phi : X \to \Sigma(\{0,1\})$ given by $\phi(x) = z$ satisfies the conditions in the conclusion of the theorem. It remains to prove that $z$ is $F$-disjoint, $z$ is $FF^{-1}$-covering, and $\phi$ is a sliding block code.

To prove that $z$ is $F$-disjoint, let $g_1 \neq g_2 \in \mathcal{C}_n$ be arbitrary. We claim that $g_1 F \cap g_2 F = \varnothing$. Suppose to the contrary that there exists some $f_1, f_2 \in F$ such that $g_1f_1 = g_2f_2$ (note that $f_1 \neq f_2$).  Given that $g_1, g_2 \in \mathcal{C}_n$, there must exist a minimal $n_1, n_2 \geq 1$ such that $g_1 \in \mathcal{C}_{n_1}$ and $g_2 \in \mathcal{C}_{n_2}$. Without loss of generality, assume that $n_1 \leq n_2$. If $n_1 < n_2$ (in which case $\mathcal{C}_{n_1} \subset \mathcal{C}_{n_2-1}$ by construction), then by construction it must hold that $g_2FF^{-1}\cap\mathcal{C}_{n_2-1} = \varnothing$, but this contradicts the fact that $g_2f_1f_2^{-1} = g_1 \in \mathcal{C}_{n_1}$. 
Otherwise, if $n_1 = n_2$, then $\sigma^{g_1^{-1}}x, \sigma^{g_2^{-1}}x \in [u_{n_1}]$. Moreover, from $g_1f_1 = g_2f_2$ we see that $g_2^{-1} = f_2f_1^{-1}g_1^{-1}$. Set $x_0 = \sigma^{g_1^{-1}}x \in [u_{n_1}]$, and notice that $\sigma^{f_2f_1^{-1}}x_0 =\sigma^{g_2^{-1}}x \in[u_{n_1}]$; however, this contradicts the fact that $[u_{n_1}] \cap \sigma^{f^{-1}}[u_{n_1}] = \varnothing$ for all $f\in FF^{-1}\setminus\{e\}$ by construction of $\mathcal{U}$. We derive a contradiction in either case, and therefore $g_1F\cap g_2F = \varnothing$ must hold. We conclude that $z$ is $F$-disjoint.

To prove that $z$ is $FF^{-1}$-covering, let $g\in G$ be arbitrary. Because $\mathcal{U}$ covers $X$, there must exist an $i \in [1,n]$ such that $\sigma^{g^{-1}}x \in [u_i]$; choose and fix a minimal such $i$. If $g\in \mathcal{C}_n$, then note that $g\in gFF^{-1}$ (since $e\in FF^{-1}$), and hence $g \in \cup_{c \in C_n} c FF^{-1}$, as desired. Otherwise, we must have that $\sigma^{g^{-1}}x \in [u_i]$ and $gFF^{-1}\cap \mathcal{C}_{i-1} \neq \varnothing$. In this case, there exists a $c\in \mathcal{C}_{i-1}$ such that $c\in gFF^{-1}$, which gives $g\in cFF^{-1}$, as desired. In either case, we see that there must exist a $c\in \mathcal{C}_n$ such that $g\in cFF^{-1}$, and we conclude that $z$ is $FF^{-1}$-covering.

Finally, let us show that $\phi : X \to \Sigma(\{0,1\})$ is a sliding block code. Suppose the patterns $\{u_1,\ldots, u_n\}$ that make up $\mathcal{U}$ have corresponding shapes $S_1, \ldots, S_n \subset G$. 
Let $R_0 = \varnothing$, and for each $i \in [1,n]$, inductively define $R_i = S_i \cup FF^{-1}R_{i-1}$. 
We shall argue inductively that for each $i \geq 1$, there is a block map $\Phi_i : \mathcal{L}_{R_i}(X)\to\{0,1\}$ that witnesses the sliding block code property for the map $x \mapsto \chi_{\mathcal{C}_i}$. 
We note that the existence of such a block map is equivalent to the existence of a procedure which correctly decides, for each fixed $x$ and $g$, whether $g\in \mathcal{C}_i$ based solely on the pattern $\sigma^{g^{-1}}x(R_i)$. Below we describe such a procedure.

For $i = 1$, note that $g \in \mathcal{C}_1$ if and only if $\sigma^{g^{-1}}x\in [u_1]$, which can be determined from the pattern $\sigma^{g^{-1}}x(R_1) = \sigma^{g^{-1}}x(S_1)$. Now suppose for induction that $i > 1$ is fixed and there is a procedure to decide whether $g\in \mathcal{C}_{i-1}$ from $\sigma^{g^{-1}}x(R_{i-1})$ for each $x$ and $g$. Note that $g\in \mathcal{C}_i$ if and only if $\sigma^{g^{-1}}x\in [u_i]$ and $gFF^{-1}\mathcal{C}_{i-1} = \varnothing$. The first condition can be decided from $\sigma^{g^{-1}}x(S_i)$. To decide the second, it is sufficient to check whether $gf_1f_2^{-1}\in\mathcal{C}_{i-1}$ for each $f_1,f_2\in F$. By induction, this condition can be determined from $\sigma^{(gf_1f_2^{-1})^{-1}}x(R_{i-1})$, and therefore knowledge of $\sigma^{g^{-1}}(FF^{-1}R_{i-1})$ suffices. Hence it is possible to determine whether $g\in \mathcal{C}_i$ based on knowledge of $\sigma^{g^{-1}}x(R_i) = \sigma^{g^{-1}}x(S_i \cup FF^{-1}R_{i-1})$, completing the induction and the proof.
\end{proof}

The following key lemma uses the doubling property to control the number of balls with large disjoint interiors that can intersect at a single group element. We apply this lemma in our proof of Theorem \ref{Theorem:Main-Result} to bound the number of stages  necessary for our inductive argument.

\begin{lemma} \label{Lemma:BoundedDegree}
Suppose $G$ is a countable group and $K$ is a finite symmetric set that contains the identity and satisfies the doubling property (for the subgroup $\langle K \rangle$) with constant $C_0 = C_0(K)$. Let $m_0 \geq C_0^2$ and $n_0 \in \N$. 
Suppose $Z \subset \Sigma(\{0,1\})$ is a subshift that is $K^{n_0}$-disjoint. Then for all $z = \chi_{\mathcal{C}} \in Z$, 
for all $n \in [0,n_0]$, and for all $g \in G$, we have
\begin{equation*}
\bigl| \{ c \in \mathcal{C} : \dist_K(g,c K^{2n_0}) \leq n \} \bigr|  \leq m_0.
\end{equation*}
\end{lemma}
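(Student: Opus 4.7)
The plan is to translate the distance condition into a containment statement, then use the disjointness hypothesis together with doubling as a volume bound.

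First I would unpack the definition of $\dist_K(g, cK^{2n_0}) \leq n$. By definition this means there exists $h \in cK^{2n_0}$ with $\rho_K(g,h) \leq n$, which (using left-invariance of $\rho_K$ and the fact that $K$ is symmetric and contains $e$) is equivalent to $g \in cK^{2n_0+n}$, and hence to $c \in gK^{2n_0+n}$. So the set to be bounded is exactly $\mathcal{C} \cap gK^{2n_0+n}$.

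Next I would invoke $K^{n_0}$-disjointness: the translates $\{cK^{n_0}\}_{c \in \mathcal{C}}$ are pairwise disjoint, and each has cardinality $|K^{n_0}|$. For every $c \in \mathcal{C} \cap gK^{2n_0+n}$, the translate $cK^{n_0}$ sits inside $gK^{2n_0+n}K^{n_0} = gK^{3n_0+n} \subseteq gK^{4n_0}$ (using $n \leq n_0$). Taking cardinalities and using disjointness yields
\[
\bigl| \mathcal{C} \cap gK^{2n_0+n} \bigr| \cdot |K^{n_0}| \;\leq\; |gK^{4n_0}| \;=\; |K^{4n_0}|.
\]

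Finally, I would apply the doubling property twice, inside $\langle K \rangle$: since $K$ contains the identity, $K^m = B_m(K)$, so $|K^{4n_0}| \leq C_0 |K^{2n_0}| \leq C_0^2 |K^{n_0}|$. Dividing by $|K^{n_0}|$ gives
\[
\bigl| \mathcal{C} \cap gK^{2n_0+n} \bigr| \;\leq\; C_0^2 \;\leq\; m_0,
\]
which is the claimed bound. The only ``obstacle'' is just keeping the exponents straight: one has to verify that $2n_0+n+n_0 \leq 4n_0$ for $n \leq n_0$ so that two applications of doubling suffice; no further ideas are needed.
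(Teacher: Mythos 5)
Your argument is correct and is essentially the paper's proof: unpack the distance condition to see that each relevant center $c$ lies in $gK^{2n_0+n} \subseteq gK^{3n_0}$, observe that the disjoint balls $cK^{n_0}$ all sit inside $gK^{4n_0}$, and then bound the count by $|B_{4n_0}(K)|/|B_{n_0}(K)| \leq C_0^2 \leq m_0$ via two applications of doubling. The only cosmetic difference is that you phrase the distance bound as a containment $c \in gK^{2n_0+n}$ while the paper writes $\rho_K(g,c) \leq 2n_0 + n$, but these are the same statement.
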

\begin{proof}
Let $S = \{ c \in \mathcal{C} : \dist_K(g, cK^{2n_0}) \leq n\}$, and suppose $|S| = m$. Then for each $c \in S$, we have $\rho_K(g,c) \leq 2n_0 + n \leq 3n_0$ (where we have used the hypothesis that $n \leq n_0$).
Since $z$ is $K^{n_0}$-disjoint, the collection $\{c K^{n_0}\}_{c \in S}$ is a disjoint collection of balls of radius $n_0$ with respect to $\rho_K$, and they are contained in the ball $g K^{4n_0}$. Hence we have
\begin{equation*}
m |B_{n_0}(K)| \leq |B_{4n_0}(K)|.
\end{equation*}
Dividing by $|B_{n_0}(K)|$ and using the hypotheses on $C_0$ and $m_0$, we conclude that
\begin{align*}
m & \leq \frac{ |B_{4n_0}(K)|}{ |B_{n_0}(K)| } \\
& \leq  \frac{ |B_{2n_0}(K)|}{ |B_{n_0}(K)| } \cdot  \frac{ |B_{4n_0}(K)|}{ |B_{2n_0}(K)| } \\
& \leq C_0^2 \\
& \leq m_0.
\end{align*}
\end{proof}

\section{Proof of Theorem \ref{Theorem:Main-Result}} \label{Sect:Proof}

Here we begin the proof of Theorem \ref{Theorem:Main-Result}. For convenience, we provide a restatement of the theorem.

\begin{main-result}
Let $G$ be a countable group such that every finitely generated subgroup of $G$ has polynomial growth. 
Let $X$ be an aperiodic $G$-subshift, and let $Y$ be a nonempty $G$-subshift with the finite extension property. Then there exists a homomorphism $\phi : X \to Y$. 
\end{main-result}

Suppose $G$, $X$ and $Y$ satisfy the hypotheses of the theorem.
Let $K \subset G$ be a finite symmetric set containing the identity and witnessing both the FEP and the strong irreducibility of $Y$. Let $\mathcal{F} \subset \mathcal{A}^K$ be a set of forbidden patterns witnessing the FEP of $Y$.  Since the group generated by $K$ is a finitely generated subgroup of $G$, it has polynomial growth (by hypothesis) and therefore satisfies the doubling property. Let $C_0 = C_0(K)$ be a constant witnessing the doubling property on $\langle K \rangle$. 
Choose a natural number $m_0$ such that
$m_0 \geq C_0^2$, 
and let $n_0 = 3m_0$. 

Let $F = K^{n_0}$, and note that by the symmetry of $K$ we have $F^{-1} = K^{n_0}$. By Proposition \ref{Prop:Quasi}, there exists a homomorphism $\phi_0 : X \to \Sigma(\{0,1\})$ such that $\phi_0(X)$ is $K^{n_0}$-disjoint and $K^{2n_0}$-covering. Let $Z = \phi_0(X)$.

The remainder of the proof involves constructing a homomorphism $\phi_1 : Z \to Y$, which we organize as follows. First, given $z \in Z$, we inductively construct pairs of sets $(U_1,V_1),\dots,(U_{m_0},V_{m_0})$, where $U_m,V_m \subset G$ for each $m = 1,\dots,m_0$ and $U_{m_0} = V_{m_0} = G$. Then we use the FEP to construct patterns on each $U_m$ and $V_m$, and we define $\phi_1(z)$ to be the pattern constructed on $V_{m_0}$.

\subsection{The sets $U_m$ and $V_m$}

The construction of the sets $U_m$ and $V_m$ proceeds by induction on $m$, with $m_0$ stages in total, but first we define some helpful notation.
For each $z = \chi_{\mathcal{C}} \in Z$, $g \in G$, and $n \geq 0$, let
\begin{equation*}
\deg^z_{n}(g) = | \{ c \in \mathcal{C} : \dist(g,cK^{2n_0}) \leq n\}|.
\end{equation*}
We easily observe that $\deg^z_n(g) \leq \deg^z_{n+1}(g)$, and 
since $z$ is $K^{2n_0}$-covering, we have $\deg^z_{0}(g) \geq 1$ for all $g \in G$.
Next, for each $n \geq 0$ and $m \geq 0$, we define the set
\begin{equation*}
E^z_{m,n} = \{ g \in G : \deg^z_{n}(g) \leq m\}.
\end{equation*}
Observe that by Lemma \ref{Lemma:BoundedDegree}, we have $E^z_{m_0,n_0} = G$ for all $z \in Z$.

Now let $z = \chi_{\mathcal{C}} \in Z$ {be fixed}, and 
let us inductively construct some sets $U_m = U_m(z)$ and $V_m = V_m(z)$ for $m = 1,\dots, m_0$. To ease notation, let $E_{m,n} = E_{m,n}^z$ and $\deg_n = \deg_n^z$. For all $c \in \mathcal{C}$, we let
\begin{equation*}
T_c = cK^{2n_0} \cap E_{1,2}.
\end{equation*}
For $m=1$, we set
\begin{align*}
U_1 & = \bigcup_{c \in \mathcal{C}} T_c,
\end{align*}
and
\begin{align*}
V_1 & = \int_K(U_1).
\end{align*}
Now suppose by induction that $1 < m \leq m_0$ and $U_{m-1}$ and $V_{m-1}$ have been defined. For notation, let $\mathcal{C}_m = \mathcal{C}_m(z)$ denote the set of subsets of $\mathcal{C}$ of cardinality exactly $m$. For each $S \in \mathcal{C}_m$, we let
\begin{equation*}
\tilde{T}_{S} = \bigcap_{c \in S} c K^{2n_0 + 3m-3},
\end{equation*}
and
\begin{equation*}
T_{S} = \tilde{T}_S \cap E_{m,3m-1}.
\end{equation*}
Then we define
\begin{align*}
U_{m} & = \left( \bigcup_{S \in \mathcal{C}_m} T_{S} \right) \cup V_{m-1},
\end{align*}
and
\begin{align*}
V_{m} & = \int_K(U_{m}).
\end{align*}
This completes the inductive definition of the sets $U_m$ and $V_m$ for $m = 1,\dots,m_0$. For an illustration of the construction, see Figure~\ref{fig:construction}. The following three lemmas describe some properties of these sets.

\begin{figure}[hbt]
    \centering
    \begin{subfigure}[t]{.4\textwidth}
    \centering
    \includegraphics[width=5cm]{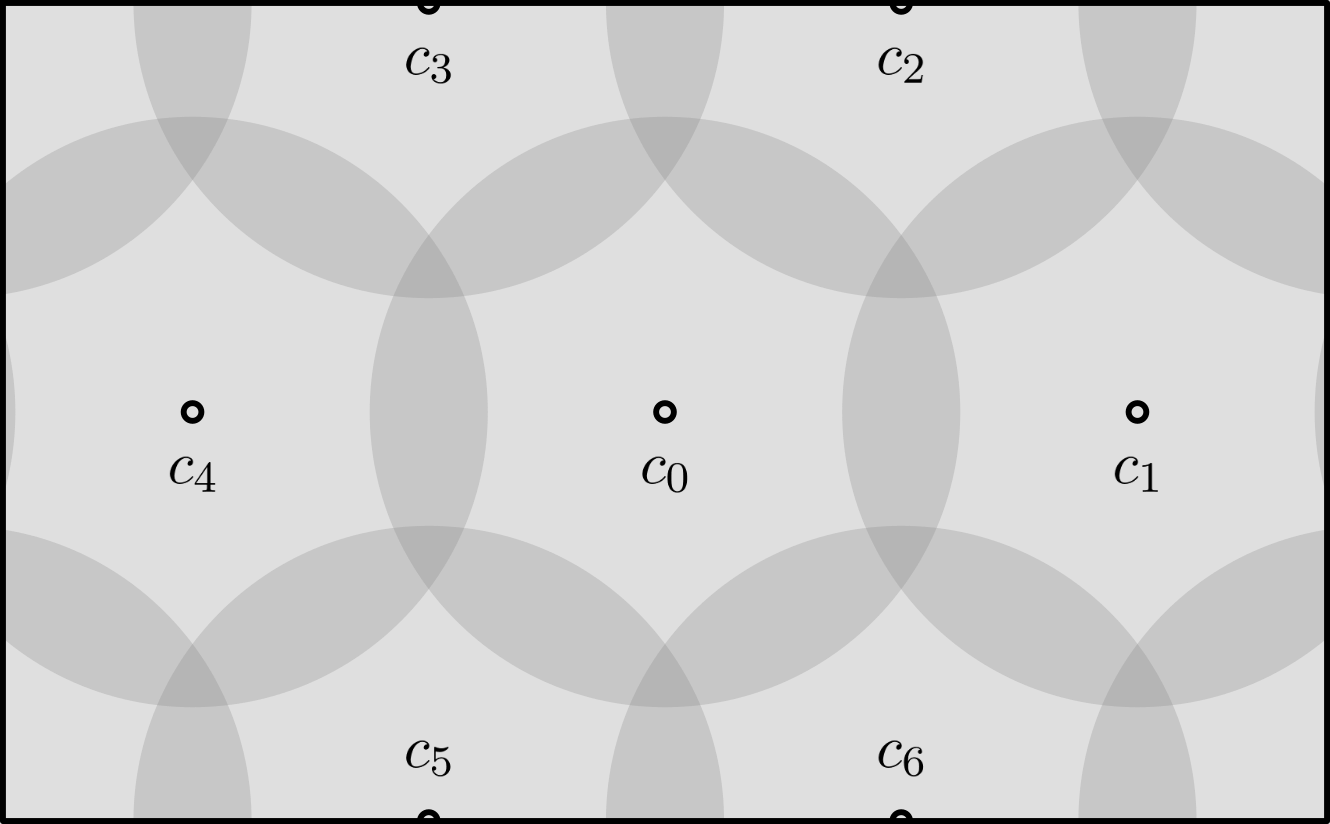}
    \caption{A hypothetical quasi-tiling $z$. The centers $\mathcal{C}$ are marked with points and the tiles of shape $K^{2n_0}$ are shown as light grey circles.}
    \label{fig:construction_1}
    \end{subfigure}
    \hspace{1cm}
    \begin{subfigure}[t]{.4\textwidth}
    \centering
    \includegraphics[width=5cm]{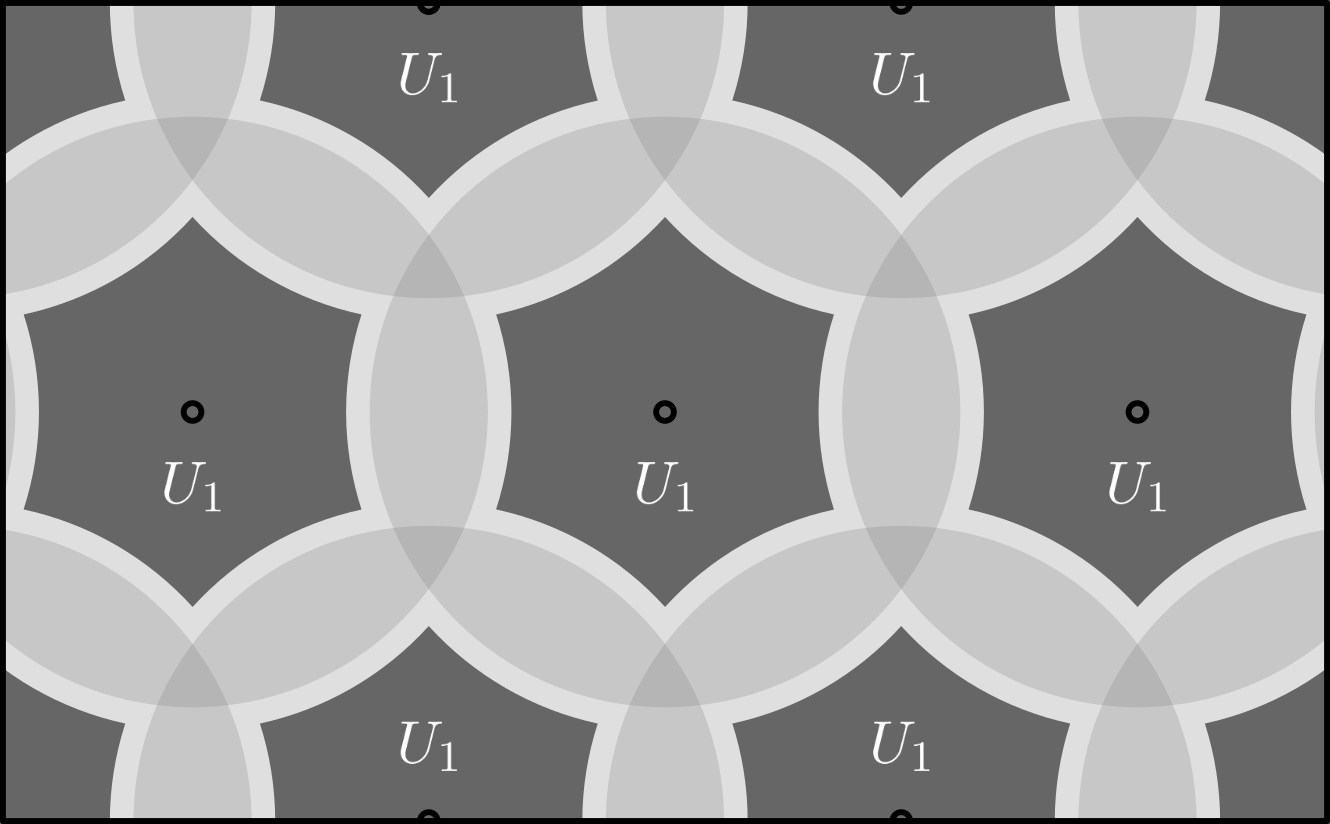}
    \caption{To construct $U_1$ (dark grey), for each center $c$ we let $T_c = $  the region of $G$ closer to $c$ than any other tile, then take the union over all $c$.}
    \label{fig:construction_2}
    \end{subfigure}
    \begin{subfigure}[t]{.4\textwidth}
    \centering
    \includegraphics[width=5cm]{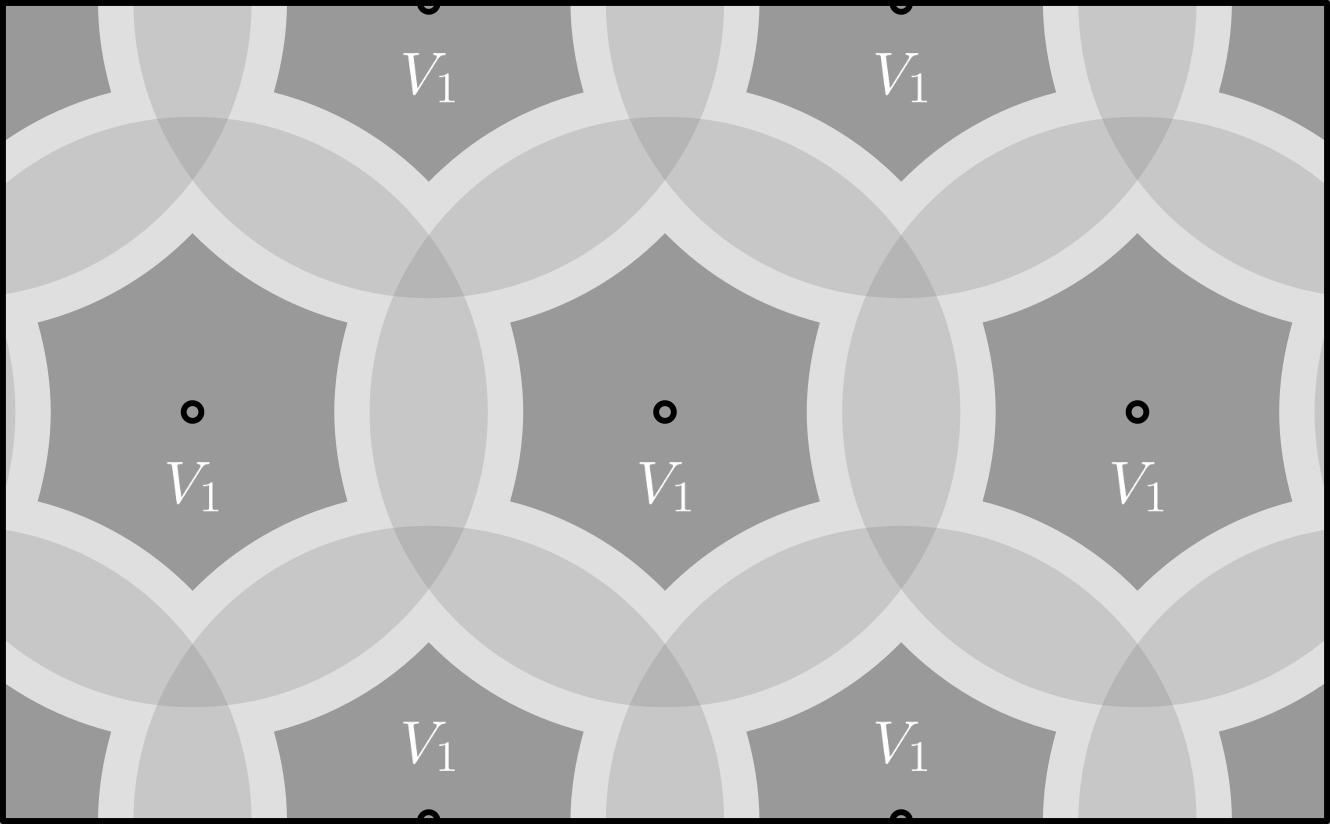}
    \caption{The set $V_1$ (medium grey) is found by taking $U_1$ and trimming off the boundary region.}
    \label{fig:construction_3}
    \end{subfigure}
    \hspace{1cm}
    \begin{subfigure}[t]{.4\textwidth}
    \centering
    \includegraphics[width=5cm]{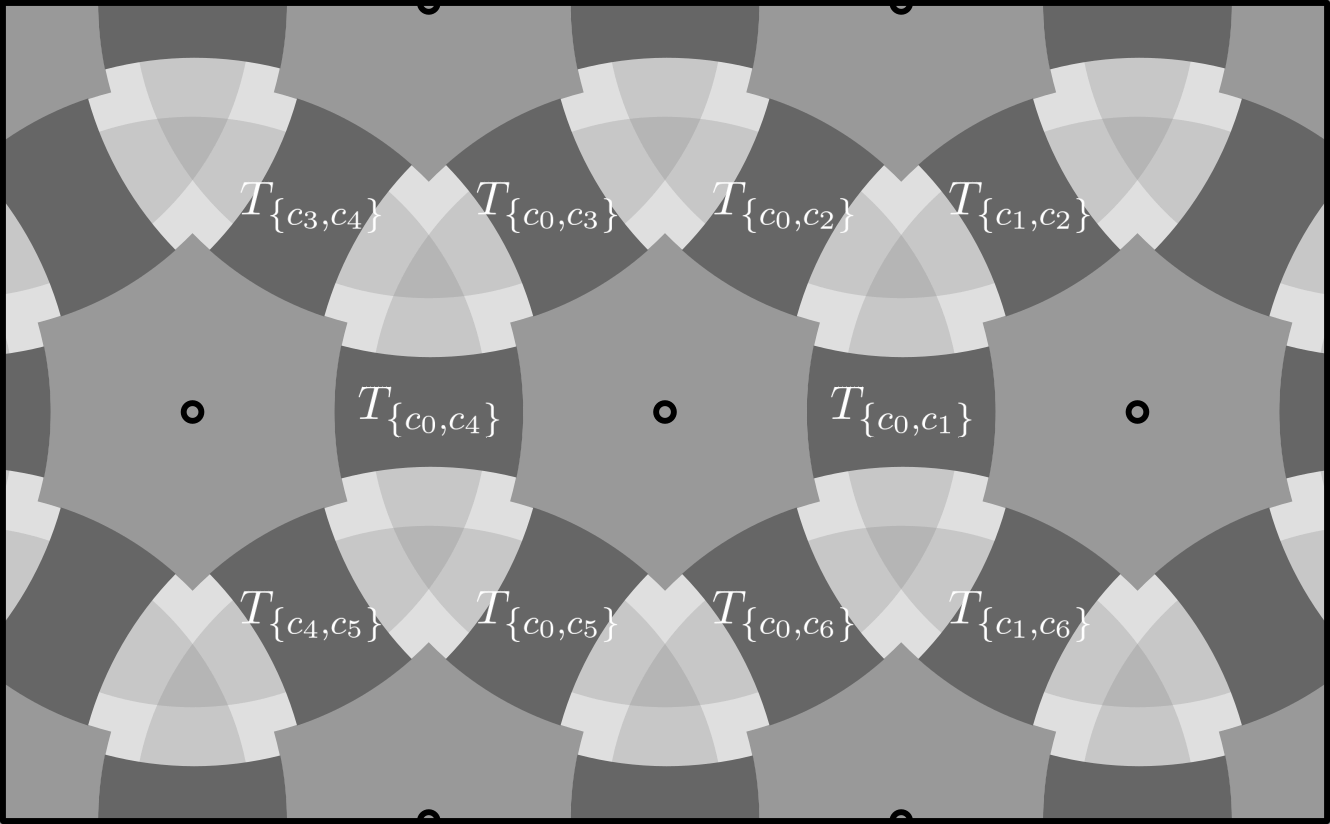}
    \caption{To construct $U_2$, for each pair of centers $c_i$, $c_j$ we let $T_{\{c_i,c_j\}} =$ the region of $G$ closest to both $c_i$ and $c_j$, then union together with $V_1$.}
    \label{fig:construction_4}
    \end{subfigure}
    \begin{subfigure}[t]{.4\textwidth}
    \centering
    \includegraphics[width=5cm]{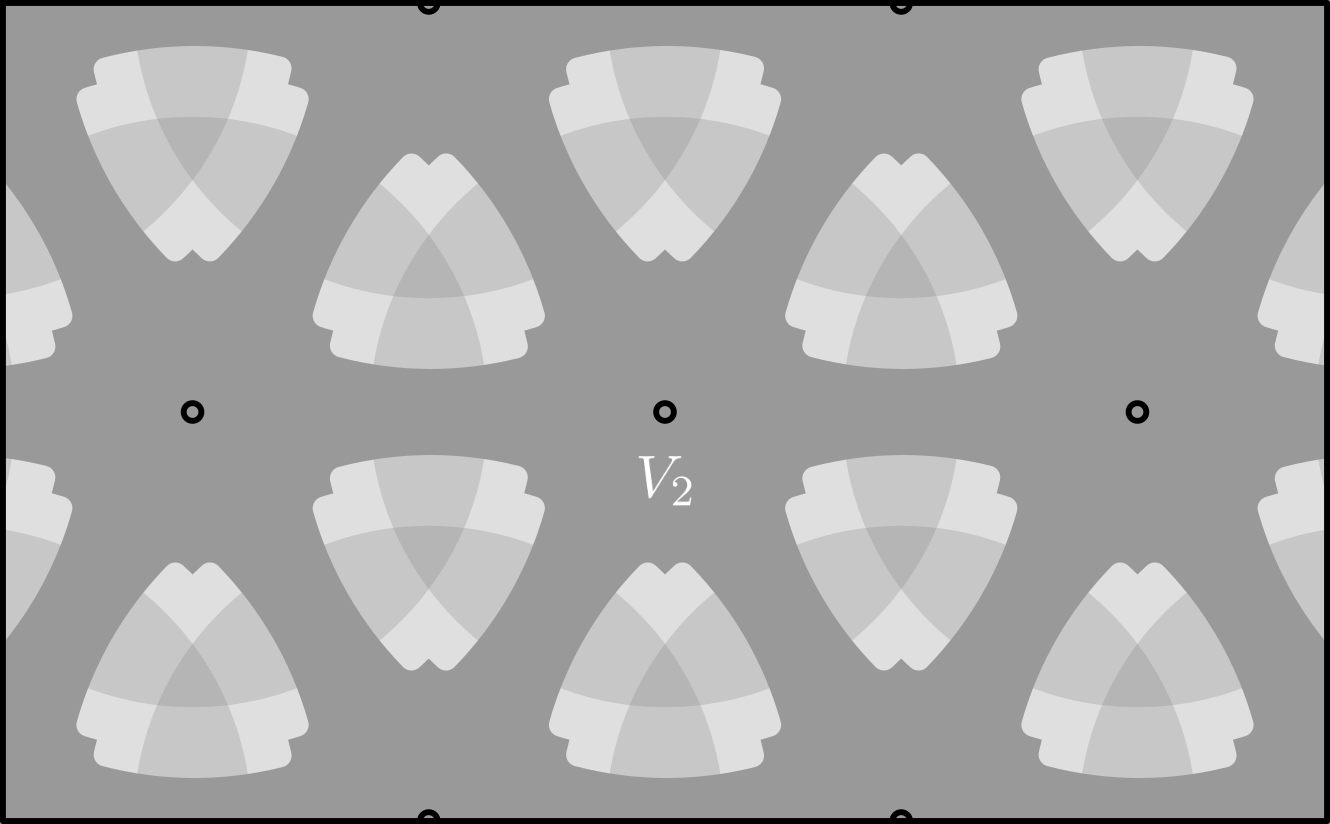}
    \caption{The set $V_2$ is obtained by trimming off the boundary of $U_2$.}
    \label{fig:construction_5}
    \end{subfigure}
    \hspace{1cm}
    \begin{subfigure}[t]{.4\textwidth}
    \centering
    \includegraphics[width=5cm]{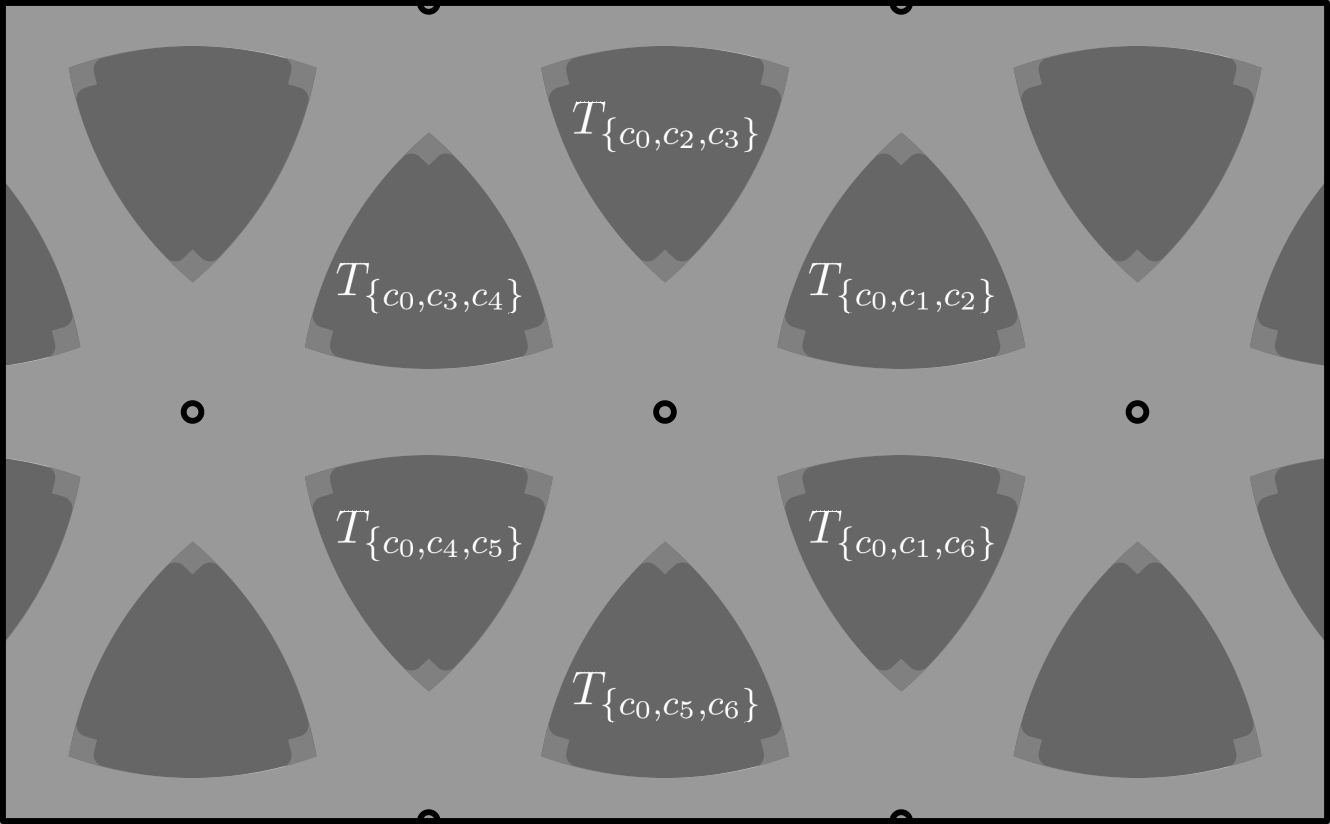}
    \caption{For $m = 3$ we consider sets of the form $T_{c_i,c_j,c_k}$, which are intersections of three expanded tiles. In this case, $U_3 = G$.}
    \label{fig:construction_6}
    \end{subfigure}
\caption{An illustration of the construction of $U_1$, $V_1$, $U_2$, $V_2$, and $U_3$ in a hypothetical case where $G = \mathbb{Z}^2$ is tiled by circles.}
\label{fig:construction}
\end{figure}

\begin{lemma} \label{Lemma:Disjoint}
For each $m \in [1,m_0]$, the collection $\{T_{S} K\}_{S \in \mathcal{C}_m}$ is pairwise disjoint.
\end{lemma}
\begin{proof}
Suppose $1 \leq m \leq m_0$ and we have $T_{S} K \cap T_{S'} K \neq \varnothing$ for some $S, S' \in \mathcal{C}_m$. Then there exists $g \in T_{S}$ such that $g \in T_{S'}K^2$. Since $g \in T_S \subset \tilde{T}_{S}$, we see that $\dist_K(g,cK^{2n_0}) \leq 3m-3$ for all $c \in S$, and thus $m \leq \deg_{3m-3}(g) \leq \deg_{3m-1}(g)$. Furthermore, since $g \in T_S \subset E_{m,3m-1}$, we have that $\deg_{3m-1}(g) \leq m$, and therefore $\deg_{3m-1}(g) = m$. By definition, this means that there are exactly $m$ elements $c \in \mathcal{C}$ such that $\dist(g,cK^{2n_0}) \leq 3m-1$. Now since $g \in T_{S'}K^2$, for each $c \in S'$, we get $\dist_K(g,cK^{2n_0}) \leq (3m-3)+2=3m-1$. Since $|S| = |S'| = m$ and $\dist(g,cK^{2n_0}) \leq 3m-1$ for all $c \in S \cup S'$, we must have that $S = S'$. 
\end{proof}

\begin{lemma} \label{Lemma:Contains}
For each $m \in [1,m_0]$, we have
\begin{equation} \label{Eqn:UmContains}
U_m \supset E_{m,3m-1},
\end{equation}
and
\begin{equation} \label{Eqn:VmContains}
V_m \supset E_{m,3m}.
\end{equation}
\end{lemma}
\begin{proof}
We proceed by induction on $m$. For the base case ($m=1$), since $\{cK^{2n_0}\}_{c \in \mathcal{C}}$ covers $G$, we get
\begin{equation*}
U_1 = \bigcup_{c \in \mathcal{C}} T_c = \bigcup_{c \in \mathcal{C}} cK^{2n_0} \cap E_{1,2} = \left( \bigcup_{c \in \mathcal{C}} cK^{2n_0} \right) \cap E_{1,2} = E_{1,2}.
\end{equation*}
Furthermore, for each $c \in \mathcal{C}$, we have 
\begin{equation*}
\int_K(cK^{2n_0} \cap E_{1,2}) \supset cK^{2n_0} \cap E_{1,3}.
\end{equation*}
Then we obtain that 
\begin{align*}
V_1 &= \int_K(U_1)\\
&\supset \bigcup_{c \in \mathcal{C}} \int_K(cK^{2n_0} \cap E_{1,2})\\
&\supset \bigcup_{c \in \mathcal{C}} cK^{2n_0} \cap E_{1,3}\\ 
&= \left( \bigcup_{c \in \mathcal{C}} cK^{2n_0} \right) \cap E_{1,3} \\
&= E_{1,3},
\end{align*}
where we have again used that $\{cK^{2n_0}\}_{c \in \mathcal{C}}$ covers $G$.

Now assume by induction that (\ref{Eqn:UmContains}) and (\ref{Eqn:VmContains}) hold for some $m < m_0$. Then
\begin{equation} \label{Eqn:InductionUm}
U_{m+1}  = \left( \bigcup_{S \in \mathcal{C}_{m+1}} T_{S} \right) \cup V_m 
 \supset  \left( \bigcup_{S \in \mathcal{C}_{m+1}} \tilde{T}_{S} \cap E_{m+1,3m+2}\right) \cup E_{m,3m},
\end{equation}
where we have used the inductive hypothesis (2).

Let $g \in E_{m+1,3m+2}$. If $\deg_{3m}(g) \leq m$, then $g \in E_{m,3m} \subset U_{m+1}$ by (\ref{Eqn:InductionUm}). Suppose instead that $\deg_{3m}(g) > m$. Then we have that $\deg_{3m}(g) = m+1$, since $\deg_{3m}(g) \leq \deg_{3m+2}(g) \leq m+1$. Then there exists $S \in \mathcal{C}_{m+1}$ such that 
\begin{equation*}
g \in \left( \bigcap_{c \in S} cK^{2n_0+ 3m} \right) \cap E_{m+1,3m+2} =  \tilde{T}_{S} \cap E_{m+1,3m+2} \subset U_{m+1},
\end{equation*}
where we have again used (\ref{Eqn:InductionUm}).
This verifies (\ref{Eqn:UmContains}) for $m+1$.

Finally, let $g \in E_{m+1,3m+3}$. Then $gK \subset E_{m+1,3m+2} \subset U_{m+1}$. Hence $g \in \int_K(U_{m+1})$, which establishes (\ref{Eqn:VmContains}) for $m+1$ and completes the induction.
\end{proof}

\begin{lemma} \label{Lemma:AllOfG}
$V_{m_0} = G$.
\end{lemma}
\begin{proof}
By Lemma \ref{Lemma:Contains} and our choice of $n_0$, we have $V_{m_0} \supset E_{m_0,3m_0} = E_{m_0,n_0}$. By Lemma \ref{Lemma:BoundedDegree}, for all $g \in G$, we have $\deg_{n_0}(g) \leq m_0$. Hence $G = E_{m_0,n_0} \subset V_{m_0}$. 
\end{proof}

\subsection{The homomorphism $\phi_1 : Z \to Y$}

In this section we describe how to use the FEP and the sets constructed in the previous section to define the desired homomorphism. To begin, we construct a type of block map to be used in the construction that follows.
Let $D$ be the set of all tuples $(A,F,v)$ such that $e \subset A  \subset K^{10n_0}$, $F \subset K^{10n_0}$, and $v \in \mathcal{L}_F(Y)$. Note that we allow tuples of the form $(A,\varnothing, \lambda)$, where $\lambda$ is the empty word.  We now construct a map that extends globally allowed patterns on $F$ to globally allowed patterns on $A \cup F$. More precisely, we claim that there is a map $\Phi : D \to \mathcal{L}(Y)$ satisfying the following conditions:
\begin{itemize}
\item[(i)] if $(A,F,v) \in D$, then $\Phi(A,F,v)$ is in $\mathcal{L}_A(Y)$;
\item[(ii)] the concatenation $\Phi(A,F,v) \cup v$ is well-defined and contained in $\mathcal{L}_{A \cup F}(Y)$;
\item[(iii)] if $(A,F,v)$ and $(gA,gF,g \cdot v)$ are both in $D$, then $\Phi(gA,gF,g\cdot v) = g \cdot \Phi(A,F,v)$.
\end{itemize}
 Let us construct such a map. First note that $K^{10n_0}$ is a finite set, and therefore $D$ is a finite set. Next, define an equivalence relation on $D$ as follows: for $(A_1,F_1,v_1)$ and $(A_2,F_2,v_2) \in D$, we set $(A_1,F_1,v_1) \sim (A_2,F_2,v_2)$ whenever there exists $g \in G$ such that $(A_1,F_1,v_1) = (gA_2,gF_2,g \cdot v_2)$. Note that this is indeed an equivalence relation. Let $\{C_1,\dots,C_{r_0}\}$ be the partition of $D$ into equivalence classes. For each $r \in \{1,\dots,r_0\}$, choose $(A_r,F_r,v_r)$ from $C_r$ arbitrarily. Since $v_r \in \mathcal{L}(Y)$, there exists a point $y_r \in Y$ such that $y_r(F_r) = v_r$. Define $\Phi(A_r,F_r,v_r) = y_r(A_r)$. Now let $(gA_r, gF_r, g \cdot v_r)$ be an arbitrary element of $C_r$. We define $\Phi(g A_r, gF_r, g \cdot v_r) = g \cdot \Phi(A_r,F_r,v_r)$. This defines $\Phi$ on all of $D$, and we note that properties (i) - (iii) hold by construction.

Now we define a map $\phi_1 : Z \to Y$. For each $z = \chi_{\mathcal{C}} \in Z$, the map is defined inductively in $m_0$ stages. At stage $m$, we first define a configuration on $U_m = U_m(z)$ that contains no forbidden patterns from $Y$ (i.e., it's locally allowed), and then we define a configuration on $V_m = V_m(z)$ that appears in a point in $Y$ (i.e., it's globally allowed). 

To begin the induction, let $m = 1$. Note that for each $c \in \mathcal{C}$, we have $e \in c^{-1} T_c \subset K^{2 n_0}$. Let $A_c = c^{-1} T_c$. Then $(A_c,\varnothing,\lambda) \in D$, and we let $p_c = c \cdot \Phi(A_c,\varnothing,\lambda) \in \mathcal{L}_{T_c}(Y)$. 
By the disjointness of $\{T_c K \}_{c \in \mathcal{C}}$ (Lemma \ref{Lemma:Disjoint}), the concatenation $u_1 = \sqcup p_c$ is well-defined and contains no forbidden patterns. Note that $u_1$ has domain $U_1$ and $V_1 = \int_K(U_1)$. Then by the FEP there is a point $y_1 \in Y$ such that $y_1(V_1) = u_1(V_1)$ (where we have used the equivalent characterization of the FEP in Remark \ref{Remark:FEP-Restatement}). Let $v_1 = u_1(V_1)$, and note that $v_1 \in \mathcal{L}_{V_1}(Y)$.

Now suppose by induction that for some $1 < m \leq m_0$ we have constructed a configuration $v_{m-1} \in \mathcal{L}_{V_{m-1}}(Y)$.  Consider an arbitrary nonempty $T_{S}$ with $S \in \mathcal{C}_m$.
Choose $g \in T_{S}$ arbitrarily.  (We argue below that the definition of the pattern $p_S$ does not depend on this choice.) Let $A_{S} = g^{-1}T_{S}$, and let
\begin{equation*}
F_{S} = \left( \bigcup_{c \in S} cK^{2n_0 + 3m-3} \right) \cap V_{m-1}.
\end{equation*}
Let $w_{S} = v_{m-1}(F_S)$. One may easily check that $(A_S,g^{-1}F_S, g^{-1}w_S) \in D$. Then let $p_S = g \cdot \Phi(A_S,g^{-1}F_S, g^{-1} \cdot w_S)$, and note that the concatenation $p_S \cup w_S$ is well-defined and contained in $\mathcal{L}_{T_S \cup F_S}(Y)$ by properties (i) and (ii) of $\Phi$. Furthermore, $p_S$ is independent of the choice of $g \in T_S$ by property (iii) of $\Phi$. Now observe that the concatenation
\begin{equation*}
u_m = \bigcup_{S \in \mathcal{C}_m} p_S \cup w_S = \left( \bigcup_{S \in \mathcal{C}_m} p_S \right) \cup v_{m-1}
\end{equation*}
is well-defined and contains no forbidden patterns (using Lemma \ref{Lemma:Disjoint}). Also, the shape of $u_m$ is $U_m$ and we have $V_m = \int_K(U_m)$. Then by the FEP, there exists $y_m \in Y$ such that $y_m(V_m) = u_m(V_m)$. Let $v_m = u_m(V_m)$, and note that $v_m \in \mathcal{L}(Y)$. This completes the induction. 

Finally, to define the map $\phi_1 : Z \to Y$ at the point $z$, we set $\phi_1(z) = v_{m_0}$, which is defined on all of $G$ by Lemma \ref{Lemma:AllOfG} and contained in $Y$ by construction. It remains to show that $\phi_1$ is continuous and shift-commuting.

\begin{lemma}
The map $\phi_1 : Z \to Y$ constructed in this manner is a homomorphism.
\end{lemma}
\begin{proof} 
First, observe that for all $n \in [0,n_0]$, we have $\dist_K(g,cK^{2n_0}) \leq n$ if and only if $c \in gK^{2n_0+n}$, and therefore the map $z \mapsto \deg^z_n$ can be written as a sliding block code with shape $K^{3n_0}$. Hence, for all $m \in [1,m_0]$ and $n \in [0,n_0]$, the map $z \mapsto \chi_{E^z_{m,n}}$ can be written as a sliding block code with shape $K^{3n_0}$. Next, since $\sigma^g z = \chi_{g \mathcal{C}}$, we have that $\mathcal{C}_m(\sigma^gz) = g \cdot \mathcal{C}_m(z)$. Furthermore, for all $n \in [0,n_0]$ and all $g \in G$, for any collection of $m \leq m_0$ group elements $g_1,\dots,g_m \in G$, we have
\begin{equation*}
g \in \bigcap_{j=1}^m g_j K^{2n_0 + n} \quad \iff \quad \forall j=1,\dots,m, \, g_j \in gK^{2n_0+n}.
\end{equation*}
Since the latter condition is determined by $gK^{3n_0}$, we conclude by induction on $m$ that the maps $z \mapsto \chi_{U_m(z)}$ and $z \mapsto \chi_{V_m(z)}$ can be written as block codes with shape $K^{3n_0}$. 
Finally, we note that at each stage of the induction, any symbol defined in $u_m$ and $v_m$ at $g \in G$ can be determined in a shift-commuting manner by the {pattern} $z(g K^{10n_0})$ (depending on the patterns $u_{m-1}$ and $v_{m-1}$, the sets $U_m$ and $V_m$, and the map $\Phi$). Since there are only $m_0$ stages of the induction (for all $z \in Z$), we obtain that the map $\phi_1$ is a sliding block code and hence a homomorphism.
\end{proof}

\section{Proof of Corollary \ref{Cor:Embedding}} \label{Sect:Cor}

Finally, we present a short proof of Corollary~\ref{Cor:Embedding} by combining Theorems \ref{Thm:bland} and \ref{Theorem:Main-Result}. We need only to quickly check that the hypotheses of Theorem~\ref{Thm:bland} are implied by the hypotheses and the conclusion of Theorem~\ref{Theorem:Main-Result}. For convenience, we provide a restatement of Corollary~\ref{Cor:Embedding}.

\begin{embedding-corollary}
Let $G$ be a countable group such that every finitely generated subgroup of $G$ has polynomial growth. 
Suppose $X$ is a nonempty aperiodic $G$-subshift and $Y$ is a $G$-subshift with the finite extension property and no global period. If $h(X) < h(Y)$, then $X$ embeds into $Y$.
\end{embedding-corollary}

\begin{proof}
    We claim first that the hypotheses imply that $G$ is amenable. First, note that (1) a group is amenable if and only if all of its finitely generated subgroups are amenable, since amenability is preserved under taking subgroups and directed unions, and (2) any finitely generated group of subexponential growth is  amenable. Thus, $G$ is amenable since every finitely generated subgroup of $G$ is of subexponential growth. Furthermore, since every finitely generated subgroup of $G$ has subexponential growth, $G$ also satisfies the comparison property \cite[Theorem~6.33]{downarowicz_zhang-symb-ext}. 

    Next we note that since $Y$ satisfies the FEP, it follows that $Y$ is a strongly irreducible SFT (Proposition~\ref{Prop:SIandSFT}). Given that $X$ is aperiodic,  Theorem~\ref{Theorem:Main-Result} gives a homomorphism $\phi : X \to Y$. Given also that $h(X) < h(Y)$ {and $Y$ has no global period}, we see that every hypothesis of Theorem~\ref{Thm:bland} is satisfied. Thus, there exists an embedding of $X$ into $Y$.
\end{proof}

\section*{Acknowledgments}
The authors thank Ronnie Pavlov for productive conversations and Mike Boyle for many helpful comments. KM acknowledges the support of the National Science Foundation through the grants DMS-1847144 and DMS-2113676.

\bibliographystyle{plain}
\bibliography{references}

\end{document}